\pgfplotsset{compat=1.16}
\theoremstyle{theorem}
\newtheorem{theorem}{Theorem} 
\newtheorem{corollary}{Corollary} 
\newtheorem{lemma}{Lemma} 
\newtheorem{proposition}{Proposition} 
\theoremstyle{definition}
\newtheorem{definition}{Definition} 
\newtheorem{assumption}{Assumption} 
\newtheorem{remark}{Remark} 
\newcommand\normx[1]{\Vert#1\Vert}
\newcommand{\R}{\mathbb{R}}
\newcommand{\N}{\mathsf{N}}
\newcommand{\K}{\mathsf{K}}
\newcommand{\U}{\mathsf{U}}
\newcommand{\X}{\mathsf{X}}
\newcommand{\bu}{\mathsf{u}}
\newcommand{\col}[1]{\mathrm{col}\{#1\}}
\def\BibTeX{{\rm B\kern-.05em{\sc i\kern-.025em b}\kern-.08em
    T\kern-.1667em\lower.7ex\hbox{E}\kern-.125emX}}
\begin{document}
%%%%%%%%%%%%%%%%%%%%%%%%%%%%%%%%%%%%%%%%%%%%%%%%%%%%%%%%%%%%%%%%%%%%%%%%%%%%%%%%%

\title{Generalized open-loop Nash equilibria in linear-quadratic difference games with coupled-affine inequality constraints}
\author{Partha Sarathi Mohapatra and Puduru Viswanadha Reddy, \IEEEmembership{Member, IEEE} 
		\thanks{P. S. Mohapatra and P. V. Reddy are with the Department of Electrical Engineering, Indian Institute of Technology-Madras, Chennai, 600036, India.
		{(e-mail:   ps\textunderscore mohapatra@outlook.com, vishwa@ee.iitm.ac.in)}}} 	
	\date{ }
	\maketitle
        \thispagestyle{headings}
        %\thispagestyle{empty}
%%%%%
	\begin{abstract}  
		In this note, we study a class of deterministic finite-horizon linear-quadratic difference games with coupled affine inequality constraints involving both state and control variables. We show that the necessary conditions for the existence of generalized open-loop Nash equilibria in this game class lead to two strongly coupled discrete-time linear complementarity systems. Subsequently, we derive sufficient conditions by establishing an equivalence between the solutions of these systems and convexity of the players' objective functions. These conditions are then reformulated as a solution to a linear complementarity problem, providing a numerical method to compute these equilibria. We illustrate our results using a network flow game with constraints.
	\end{abstract}
\begin{IEEEkeywords}
      Difference games; coupled-affine inequality constraints; generalized open-loop Nash equilibrium; linear complementarity problem 
\end{IEEEkeywords}
%%%%%
\section{Introduction}   
Dynamic game theory (DGT) provides a mathematical framework for analyzing multi-agent decision processes evolving over time. DGT has been applied effectively in engineering, management science, and economics, where dynamic multi-agent decision problems arise naturally (see \cite{isaacs:65, Engwerda:05, Basar:99, Dockner:00}). Notable engineering applications include cyber-physical systems \cite{Zhu:15}, communication and networking \cite{Zazo:16}, and smart grids \cite{Chen:16}. Most existing DGT models in these works are formulated in unconstrained settings. However, real-world multi-agent decisions often involve constraints like saturation limits, bandwidth restrictions, production capacities, budgets, and emission limits. These introduce equality and inequality constraints into the dynamic game model, linking each player's decisions to others' at every stage. As a result, players' decision sets are interdependent, often termed \emph{coupled} or non-rectangular.

In static games, where players make decisions only once, the generalized Nash equilibrium (GNE) extends the Nash equilibrium concept to scenarios with interdependent or non-rectangular decision sets (see \cite{Debreu:54}, \cite{Rosen:65}). The existence conditions for GNE in general settings have been studied in operations research using variational inequalities (see \cite{Harker:91} and \cite{Facchinei:07}). Recently, dynamical systems-based methods for computing GNE, known as GNE-seeking, have gained attention for controlling large-scale networked engineering systems; see the recent review article \cite{Belgioioso:22}   for more details. 

 This note focuses on the existence conditions for generalized Nash equilibria in dynamic games where players' decision sets are coupled at each stage. Unlike static games, Nash equilibrium strategies in dynamic games vary based on the information structure available to players. In the open-loop structure, decisions depend on time and the initial state, while in the feedback structure, they depend on the current state. Linear quadratic (LQ) differential games with implicit equality constraints modeled by differential-algebraic equations (DAEs) have been studied in \cite{Engwerda:09, Engwerda:12, Reddy:13} for both open-loop and feedback information structures. Stochastic formulations of DAE-constrained LQ differential games with feedback structures were investigated in \cite{Tanwani2}. Scalar LQ multi-stage games arising in resource extraction with state-dependent upper bounds on decision variables were examined in \cite{Singh2}. In \cite{Laine:23}, the authors studied linear and nonlinear difference games with equality and inequality constraints, as well as numerical methods for approximating generalized feedback Nash equilibria. The authors in \cite{Reddy:15, Reddy:17} examined LQ difference games with affine inequality constraints, providing conditions for constrained open-loop and feedback Nash equilibria. These games involve two types of control variables: one influencing state evolution independently and the other affecting constraints, with the former not coupled. A class of difference games with inequality constraints, termed dynamic potential games, was studied in \cite{Zazo:16}. Restrictive conditions in this study allowed for computing generalized open-loop Nash equilibria by solving a constrained optimal control problem. However, these conditions limit the range of possible strategic scenarios. In \cite{Partha:23}, scalar mean-field LQ difference games with coupled affine inequality constraints were explored, providing conditions for mean-field-type solutions. In summary, to the best of our knowledge, both necessary and sufficient conditions for the existence of generalized open-loop Nash equilibrium (GOLNE) strategies are not yet available for deterministic LQ difference games where players' control variables are fully coupled through inequality constraints.

\subsubsection*{Contributions}  
We consider a deterministic, finite-horizon, non-zero-sum LQ difference game with inequality constraints, where players' decision sets at each stage are coupled by affine inequality constraints involving state and control variables. This note aims to derive both necessary and sufficient conditions for the existence of GOLNE strategies in this class of dynamic games. Our contributions are as follows: 
\begin{enumerate}
	\item In subsection \ref{sec:Necessary}, we show that the necessary conditions for GOLNE result in a discrete-time coupled linear complementarity system (LCS).
	\item In Theorem \ref{thm:SufficentRiccati}, we derive an auxiliary static game with coupled constraints from the dynamic game, demonstrating that the necessary conditions for a generalized Nash equilibrium in this static game lead to another discrete-time LCS.
	\item  In Theorems \ref{th:TPBlink} and \ref{th:TPBlink2} and Corollary \ref{cor:uequiv}, we establish an equivalence between the solutions of these LCSs. Using an existence result from static games with coupled constraints \cite{Rosen:65}, we provide sufficient conditions for GOLNE in Theorem \ref{thm:Nashexistence}.
	\item Assuming additional conditions on the problem data, in Theorem \ref{th:OLNElcp}, we reformulate the LCS associated with the necessary conditions into a linear complementarity problem. We then show that the joint GOLNE strategy profile is an affine function of the Lagrange multipliers associated the constraints.
\end{enumerate}

\subsubsection*{Novelty and Differences with Previous Literature}  
The novelty and contribution of this note lie in generalizing the restrictive class of LQ difference games studied in \cite{Reddy:15, Reddy:17}. Unlike those works, which consider independent or rectangular decision variables influencing state evolution, this paper addresses fully coupled decision variables. This coupling results in necessary conditions for GOLNE leading to two strongly coupled discrete-time linear complementarity systems. Our approach, as shown in Theorems \ref{th:TPBlink} and \ref{th:TPBlink2} and Corollary \ref{cor:uequiv}, analyzes these systems and establishes their equivalence, which we use to derive sufficient conditions. Our results also extend \cite{Abou:03}, which dealt with LQ difference games without constraints. Unlike \cite{Zazo:16}, we impose no structural assumptions on state dynamics and quadratic objective functions, deriving both necessary and sufficient conditions for GOLNE, while \cite{Zazo:16} offers only sufficient conditions with additional restrictions. Finally, our work differs from \cite{Engwerda:09, Engwerda:12, Reddy:13, Tanwani2} by coupling control variables through inequality constraints.

This note is organized as follows: In Section \ref{sec:Preliminaries}, we introduce LQ difference games with coupled-affine inequality constraints. The necessary and sufficient conditions for GOLNE are presented in subsections \ref{sec:Necessary} and \ref{sec:sufficientconditions}, respectively. The existence conditions for GOLNE are reformulated as a large-scale linear complementarity problem in subsection \ref{sec:Solvability}. Section \ref{sec:Numerical} illustrates our results with a network-flow game with capacity constraints. Conclusions are provided in Section \ref{sec:Conclusions}. 

\subsubsection*{Notation}\label{sec:notation}  
We denote the sets of natural numbers, real numbers, $n$-dimensional Euclidean space, $n$-dimensional non-negative orthant, and $n \times m$ real matrices by $\mathbb{N}$, $\mathbb{R}$, $\mathbb{R}^n$, $\mathbb{R}^n_{+}$, and $\mathbb{R}^{n \times m}$, respectively. The transposes of a vector $a$ and a matrix $A$ are denoted by $a^{\prime}$ and $A^{\prime}$. The quadratic form $a^{\prime}Aa$ is denoted by $\normx{a}_{A}$. For $A \in \mathbb{R}^{n \times n}$ and $a \in \mathbb{R}^n$, where $n = n_1 + \cdots + n_K$, $[A]_{ij}$ denotes the $n_i \times n_j$ submatrix, and \([a]_i\) denotes the $n_i \times 1$ subvector. Column vectors $[v_1^{\prime}, \cdots, v_n^{\prime}]^{\prime}$ are written as $\col{v_1, \cdots, v_n}$ or $\col{v_k}_{k=1}^n$. The identity and zero matrices of appropriate dimensions are denoted by $\mathbf{I}$ and $\mathbf{0}$, respectively. The block diagonal matrix with diagonal elements $M_1, \cdots, M_K$ is denoted by $\oplus_{k=1}^{K}M_k$. The Kronecker product is $\otimes$. Vectors $x, y \in \mathbb{R}^n$ are complementary if $x \geq 0$, $y \geq 0$, and $x^{\prime}y = 0$, denoted by $0 \leq x \perp y \geq 0$.
%%%
%%%
%%%
\section{Dynamic game with coupled constraints}\label{sec:Preliminaries}
We denote the set of players by $\N = \{1, 2, \cdots, N\}$, the set of time instants or decision stages by $\K = \{0, 1, ..., K\}$ for $N,K\in \mathbb N$. We define  the following two sets as $\K_l:=\K\setminus \{K\}$ and $\K_r:=\K\setminus \{0\}$.  At each time instant $k \in \K_l$, each player $i \in \N$ chooses an action $u^i_k\in \R^{m_i}$ and influences the evolution of state variable $x_k\in \mathbb R^{n}$ according to the following discrete-time linear dynamics
\begin{subequations}\label{eq:DGC}
	\begin{align}
		x_{k+1} = A_k x_k+\sum_{i\in\N}B^i_k u_k^i=A_k x_k+B_k\bu_k, \label{eq:state}
	\end{align}
	where $A_k \in \R^{n\times n}, B_k^{i}\in \R^{n\times m_i}$, $B_k:=[B_k^{1},\cdots, B_k^{N}]$,  and $\bu_k:=\col{u_k^{i}}_{i=1}^{N}\in \R^m$  ($m=\sum_{i\in\N}m_i$), with a given initial condition $x_0\in\R^n$. We further assume that these decision variables for each player $i\in\N$ at every  $k \in \K_l$ satisfy the following mixed coupled-affine inequality  {constraints} 
	\begin{align}
		M_k^ix_k+N_k^i \bu_k+r_k^i \geq 0, \label{eq:constraints}
	\end{align}
	where $M_k^{i} \in \R^{c_i\times n}, N_k^{i}\in \R^{c_i\times m}, r_k^{i}\in \R^{c_i}$. For player $i\in\N$ we denote $-i:=\N\setminus\{i\}$.
	At any instant $k\in \K_l$ the collection of actions of all players except player $i$ be denoted by $u^{-i}_k:=\col{u_k^1,\cdots,u_k^{i-1},u_k^{i+1},\cdots,u_k^N}$. The profile of actions, also referred to as a strategy, of player $i\in \N$ be denoted by $\bu^{i}:=\col{u_k^{i}}_{k=0}^{K-1}$, and the  {strategies}  of all players except player $i$ be denoted by $\bu^{-i}:=\col{u_k^{-i}}_{k=0}^{K-1}$. Each player $i\in \N$, using his strategy $\mathsf {u}^i$, seeks to minimize the following interdependent stage-additive cost functional
	\begin{multline}\label{eq:objective1}
		 {J^i}\big( \bu^i, \bu^{-i} \big)  =
		\tfrac{1}{2}\normx{x_{K}}_{Q^i_{K}}+{p_K^{i}}^\prime x_K\\
		+\tfrac{1}{2}\sum_{k\in\K_l} \big(\normx{x_k}_{Q^i_k}+2{p_k^i}^\prime x_k+\sum_{j\in\N}\normx{u_k^{j}}_{R^{i j}_k}\big), 
	\end{multline}
\end{subequations}
where $R^{i j}_{k}\in  {\R^{m_i \times m_j}}$, $R^{i i}_{k}={R_k^{ii}}^\prime$, for $i,j \in \N$, $k \in \K_l$ and $Q^i_{k}\in \R^{n \times n},\, Q^i_{k} ={Q^{i}_k}^\prime,\, p^i_{k}\in \R^{n}$ for $k \in \K$.  
Due to linear dynamics, coupled constraints and interdependent quadratic objectives, \eqref{eq:DGC} constitutes a $N$-player finite-horizon  non-zero-sum linear-quadratic difference game with coupled inequality constraints, which we refer to as DGC for the  {remainder} of the paper.
\section{Generalized Open-loop Nash Equilibrium} \label{sec:Openloop}

In this section we derive both the necessary and sufficient  conditions for the existence of  {generalized} open-loop Nash equilibrium for DGC. First, we define the  admissible strategy spaces for the players and state the required assumptions. 
Eliminating the state variable in \eqref{eq:constraints}  using \eqref{eq:state}, and collecting
the constraints \eqref{eq:constraints} of all the players, the joint constraints  at stage $k\in \K_l$ are given by
\begin{multline}
	\overline{\mathbf{M}}_k (A_{k-1}\cdots A_{0} x_0+A_{k-1}\cdots A_{1} B_{0}\bu_{0}+\cdots\\
	+ A_{k-1}B_{k-2}\bu_{k-2}+ B_{k-1}\bu_{k-1})
	+\overline{\mathbf{N}}_k \bu_k+\mathbf{r}_k \geq 0,\label{eq:vectorC1}
\end{multline}
where $\overline{\textbf{M}}_k=\col{M_k^i}_{i=1}^{N}\in\R^{c\times n}$, $\overline{\textbf{N}}_k=\col{N_k^i}_{i=1}^{N}\in\R^{c\times m}$,  $\textbf{r}_k=\col{r_k^i}_{i=1}^N\in\R^{c}$ and $c=\sum_{i\in\N}c_i$. We define the set
\begin{align}
	\Omega:=\{(x_0,(\bu^i,\bu^{-i})) \in \mathbb R^n\times \mathbb R^{Km}~|~ \eqref{eq:vectorC1} \text{ holds } \forall k\in \K_l\}.
	\label{eq:sigma}
\end{align}
The set of initial conditions for which the constraints \eqref{eq:vectorC1} are feasible is then given by
\begin{align}
	\mathsf X_0:=\{x_0\in \mathbb R^n~|~\Omega \neq \emptyset\}. \label{eq:feasiableinit}
\end{align}
Clearly,  $\Omega\neq \emptyset$ implies $\mathsf X_0\neq \emptyset$. For any $x_0\in \mathsf X_0$, the joint admissible strategy space of the players  is given by
\begin{align}
	R(x_0):=\{&(\bu^{i}, \bu^{-i})\in \R^{Km}~|~(x_0,(\bu^i,\bu^{-i}))\in \Omega\}.
	\label{eq:CConstraint}
\end{align} 
For any $x_0\in \mathsf X_0$, the admissible strategy space of Player $i\in \N$ is coupled with decisions taken by other players $\bu^{-i}$, and is given by
\begin{align}
	\U^i(\bu^{-i}):=\{\bu^{i}\in\R^{Km_i}~|~(\bu^{i}, \bu^{-i}) \in R(x_0)\}.\label{eq:AdmissibleSet}
\end{align}
We have  the following assumptions regarding DGC.
\begin{assumption}\label{ass:GameAssumption}
	\begin{enumerate}[label = (\roman*)]
		\item \label{item:1} $\Omega\neq \emptyset$, and for every $x_0\in \mathsf X_0$, $R(x_0)$ is bounded.
		\item \label{item:2} The matrices $\{[N_k^i]_i, k \in \K_l, i\in\N\}$ have full rank.
		\item \label{item:3} The matrices $\{R_{k}^{ii}, k \in \K_l, i\in\N\}$ are positive definite.
	\end{enumerate}
\end{assumption}
\noindent 
If \(\Omega \neq \emptyset\), then from \eqref{eq:feasiableinit} and \eqref{eq:CConstraint}, we have that for every \(x_0 \in \mathsf{X}_0\), \(R(x_0) \neq \emptyset\). Due to the affine inequality constraints \eqref{eq:vectorC1}, \(R(x_0)\) is an intersection of \(K\) closed half-spaces and, as a result, is a closed and convex subset of \(\mathbb{R}^{Km}\) (with the usual topology on \(\mathbb{R}^{Km}\)).  So item \ref{item:1} ensures that $R(x_0)$ is a closed, convex and bounded, which is required for the existence of GOLNE. Item \ref{item:2} is required to satisfy the constraint qualification conditions. Item \ref{item:3} is a technical assumption required for obtaining individual players' controls.
\begin{remark}\label{rem:purestateC}
 Affine pure state constraints for all $k \in \K_r$ can be reformulated as mixed-type constraints \eqref{eq:constraints} using the linear state dynamics \eqref{eq:state}, and thus incorporated into our framework, provided the rank condition in Assumption \ref{ass:GameAssumption}.\ref{item:2} is met. Constraints at $k = 0$ (i.e., on $x_0$) simply further restrict the sets $\Omega$ and $\X_0$.
\end{remark}
\begin{definition}\label{def:OCNEdef} 
	An admissible strategy profile $\bu^\star \in R(x_0)$ is {a} generalized open-loop Nash equilibrium (GOLNE) for DGC if for each player $i\in \N$ the following inequality is satisfied
	\begin{align}
		J^i( \bu^{i\star}, \bu^{-i\star}) \leq J^i(\bu^i, \bu^{-i\star}),~\forall \bu^i\in 	\U^i(\bu^{-i\star}).\label{eq:OCNEdef}
	\end{align} 
\end{definition}

In a GOLNE,  each player $i\in \N$ is committed to using the course of actions at all stages, which is pre-determined according to the strategy $\bu^{i\star}$, when other players do the same.  
\subsection{Necessary   conditions}\label{sec:Necessary}
Player $i$'s   problem \eqref{eq:OCNEdef} is given by the following discrete-time constrained optimal control problem
\begin{subequations}\label{eq:dyncon}
	\begin{align}
		&\underset{\bu^{i} \in   \U^{i}(\bu^{-i\star}) }{\textrm{min}}\,J^i ( \bu^i, \bu^{-i\star}  ),\label{eq:IndvOptimization} \\
		\textrm{sub. to } &x_{k+1} = A_k x_k+B^i_k u_k^i+\sum_{j \in -i}B^j_k u_k^{j\star},~ k \in \K_l.\label{eq:stateconstraint} 
	\end{align}
	Player $i$'s feasible set $\U(\bu^{-i\star})$ depends on the GOLNE strategies of the remaining players  as follows
	\begin{align}
		M^i_k x_k+[N^i_k]_i u_k^i+\sum_{j \in -i}[N^i_k]_j u_k^{j\star}+r^i_k \geq 0,~ k \in \K_l.\label{eq:inequalityconstraint}
	\end{align}
\end{subequations}
The Lagrangian associated with Player $i$'s problem \eqref{eq:dyncon} is given by
\begin{align}\label{eq:Lagrangian}
	\mathsf L_k^i&= \tfrac{1}{2}\big(\normx{x_k}_{Q^i_k}+2{p_k^{i}}^\prime x_k+\normx{u_k^{i}}_{R^{ii}_k}+\sum_{j\in -i}\normx{u_k^{j\star}}_{R^{i j}_k}\big)\nonumber\\
	&\quad+{\lambda_{k+1}^{i}}^\prime \big(A_k x_k+B^i_k u_k^i+\sum_{j \in -i }B^j_k u_k^{j\star}\big)\nonumber\\
	&\quad-{\mu_{k}^{i}}^\prime \big(M^i_k x_k+[N^i_k]_i u_k^i+\sum_{j \in -i}[N^i_k]_j u_k^{j\star}+r^i_k\big), 
\end{align}
where  $\lambda_{k+1}^i  {\in \R^{n}}$ and $\mu_k^i {\in \R^{c_i}_+}$ be the multipliers associated with state dynamics \eqref{eq:stateconstraint} and the inequality constraints \eqref{eq:inequalityconstraint}, respectively. The necessary conditions for the existence of GOLNE for DGC are then obtained by  applying the discrete-time Pontryagin’s maximum principle \cite{Sethi:06} to the constrained optimal control problem \eqref{eq:dyncon} for each $i\in \N$, and are given by the following equations:
	\begin{subequations}\label{eq:LQKKT}
		\begin{align}
			&x_{k+1}^{\star} = A_k x_k^{\star}-\sum_{j\in\N}{B}^{j}_k (R_k^{jj})^{-1}\big({B^{j}_k}^\prime\lambda_{k+1}^j-[N^{j}_k]_j^\prime \mu_{k}^{j\star} \big), \label{eq:OPstate}\\
			&\lambda_k^i = Q_k^ix_k^{\star}+p_k^{i}+A^\prime_k\lambda_{k+1}^{i}-{M^{i}_k}^\prime \mu_{k}^{i\star}, \label{eq:OPlambda}\\ 
			&  0 \leq M_k^ix_k^{\star}-\sum_{j\in\N} [N^i_k]_j (R_k^{jj})^{-1}\big({B^{j}_k}^\prime\lambda_{k+1}^j-[N^{j}_k]_j^\prime \mu_{k}^{j\star} \big) +r_k^i\perp \mu_{k}^{i\star} \geq 0,\label{eq:OpconstraintComp} 
		\end{align}
	\end{subequations} 	
	with boundary conditions $x_0^{\star}=x_0$ and ${\lambda}^i_{K} = {Q}^i_{K}x_{K}^{\star}+{p}^i_{K}$, $i\in \N$.   Following Assumption \ref{ass:GameAssumption}.\ref{item:3}, on the positive-definiteness of $R_k^{ii}$, the GOLNE control of Player $i$  is obtained uniquely as
	\begin{align}
		u_k^{i\star}:=-(R_k^{ii})^{-1}\big({B^{i}_k}^\prime\lambda_{k+1}^i-[N^{i}_k]_i^\prime \mu_{k}^{i\star}\big),~k\in \K_l.\label{eq:OLNE} 
	\end{align} 

 Here, \eqref{eq:OPstate}-\eqref{eq:OPlambda} constitute forward and backward difference equations, and  \eqref{eq:OpconstraintComp} is a complementarity condition. As a result, the necessary conditions \eqref{eq:LQKKT} for all players constitute a coupled discrete-time linear complementarity system (LCS1); see \cite{Heemels:00} and \cite{vanderSchaft:07}. We denote solution of LCS1, if it exists, by
\begin{align} 
	\{\bm{x}^\star,\bm{\lambda},\bm{\mu}^\star\}^1:=\{{x}_k^{\star}, \lambda_k^i, ~k \in\K, ~\mu_k^{i\star}, ~k \in\K_l,~ i\in \N\}.
\end{align} 
\begin{remark}\label{rem:NashNecessay} 
   Since the conditions \eqref{eq:LQKKT} are necessary,   a solution  of LCS1, if it exists, provides a candidate GOLNE. Further, if LCS1 admits a unique solution then there exists at most one GOLNE. 
\end{remark}
\begin{remark}
The necessary conditions \eqref{eq:LQKKT} demonstrate a strong coupling between the variables $	\{\bm{x}^\star,\bm{\lambda},\bm{\mu}^\star\}^1$ due to the interdependence of the players' action sets \eqref{eq:constraints}. In contrast, in the restricted class of games studied in \cite{Reddy:15}, the necessary conditions in \cite[Theorem III.1]{Reddy:15} display weak coupling because the decision sets related to the controls affecting the state variable are independent.	
\end{remark}
\subsection{Sufficient conditions} \label{sec:sufficientconditions}
In this subsection, we derive  sufficient conditions under which the candidate equilibrium, obtained from the solution of LCS1, is indeed a GOLNE. To this end, for each Player $i$ ($i\in \N$) we define the following quadratic function 
\begin{align}
	V_k^i=\tfrac{1}{2}\normx{x_k}_{E_k^i}+{e_k^{i}}^{\prime} x_k+f_k^i, \label{eq:Vfnc}
\end{align}
where  $E_k^i \in \R^{n\times n}$, $e_k^i \in \R^{n}$, $f_k^i \in \R$ for $k\in \K$. Next, we have the following assumption on the matrices $\{E_k^i,~k\in \K,~i\in \N\}$.
\begin{assumption}\label{ass:Yassumption}
	The backward symmetric matrix Riccati equation 
    \begin{align}
		E_k^i  = A_k^{\prime} E_{k+1}^i A_k+Q^i_k-A_k^{\prime} E_{k+1}^i
		B_k^i \big(Y_k^i\big)^{-1}{B_k^{i}}^{\prime} E_{k+1}^i A_k, ~ E_K^i=Q^i_K,\label{eq:EEQ}
	\end{align}
admits a solution $\{E_k^i,~k\in \K\}$ for each $i\in \N$, and thus the matrices $\{Y_k^i=R^{ii}_k+{B_k^{i}}^\prime E_{k+1}^iB_k^i,~k \in \K_l,~i\in\N\}$ are invertible.
\end{assumption}
We note that if Assumption \ref{ass:Yassumption} holds and $\{\bm{x}^\star, \bm{\lambda}, \bm{\mu}^\star\}^1$ is a solution of LCS1, then for any strategy $(\bu^i, \bu^{-i}) \in R(x_0)$, the difference equations
	\begin{subequations}\label{eq:RiccatiBackward}
		\begin{align}
			&Y_k^i b_k^i={B_k^{i}}^\prime (E_{k+1}^i \eta_k^{i}+e_{k+1}^i\big)-[N^{i}_k]_i^\prime \mu_k^{i\star},\label{eq:bEQ}\\
			&e_k^{i} =A_k^\prime e_{k+1}^i+A_k^\prime E_{k+1}^i \eta_k^{i}-A_k^\prime E_{k+1}^i B_k^{i} b_k^i +p_k^i-{M_k^{i}}^\prime \mu_k^{i\star},\label{eq:eEQ}\\
			&f_k^i=f_{k+1}^i+\tfrac{1}{2}{\eta_k^{i}}^\prime (E_{k+1}^i\eta_k^i+2e_{k+1}^i)\nonumber\\
			&\hspace{0.5in}+\tfrac{1}{2}\sum_{j\in -i}\normx{u_k^{j}}_{R^{i j}_k}-\tfrac{1}{2}\normx{b_k^{i}}_{Y_k^i}-{\mu_{k}^{i\star}}^\prime \big(\alpha_k^{i}+r_k^i\big),\label{eq:fEQ}
		\end{align}
	\end{subequations}
with boundary conditions $e_K^{i} = p_K^i$, $b_K^i = 0$, and $f_K^i = 0$, are solvable for all $k \in \K$, where $\eta_k^i := \sum_{j \in -i} B^j_k u_k^j$ and $\alpha_k^i := \sum_{j \in -i} [N^i_k]_{j} u_k^j$.  
Using \eqref{eq:EEQ}–\eqref{eq:RiccatiBackward}, the following auxiliary result expresses the objective function \eqref{eq:objective1} in a form that will be useful in deriving the sufficient condition later in Theorem \ref{thm:Nashexistence}.
\begin{theorem}\label{thm:SufficentRiccati} 
   Let Assumption \ref{ass:Yassumption} hold. Let $\{\bm{x}^\star,\bm{\lambda},\bm{\mu}^\star\}^1$  be a solution of LCS1. Then, the objective function \eqref{eq:objective1} of each player $i\in \N$  can be expressed as:
	\begin{align}
		J^i(\bu^i, \bu^{-i})&=V_0^i+\tfrac{1}{2}\sum_{k\in\K_l}\normx{u_k^{i}+y_k^i}_{Y_k^i}\nonumber\\
		&\quad+\sum_{k\in\K_l}{\mu_{k}^{i\star}}^\prime \big(M_k^ix_k+[N_k^i]_i u_k^i+\alpha_k^i+r_k^i\big),\label{eq:CostasV}
	\end{align}
	where $y_k^i:=(Y_k^i)^{-1}{B_k^i}^\prime E_{k+1}^i A_k x_k+b_k^i$, and ${x_k,~k\in \K}$  satisfies \eqref{eq:state}.
	
\end{theorem}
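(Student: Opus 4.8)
The plan is to prove \eqref{eq:CostasV} by a standard completion-of-squares / telescoping argument along the state trajectory, using the quadratic function $V_k^i$ in \eqref{eq:Vfnc} as a storage (Lyapunov-type) function. First I would fix a player $i\in\N$ and an admissible strategy profile $(\bu^i,\bu^{-i})\in R(x_0)$, with the corresponding state trajectory $\{x_k\}_{k\in\K}$ generated by \eqref{eq:state}. I would then consider the telescoping sum $\sum_{k\in\K_l}\big(V_{k+1}^i-V_k^i\big)=V_K^i-V_0^i$, and compute $V_{k+1}^i-V_k^i$ explicitly by substituting the state update $x_{k+1}=A_kx_k+B_k^iu_k^i+\eta_k^i$ (recalling $\eta_k^i=\sum_{j\in-i}B_k^ju_k^j$) into $V_{k+1}^i=\tfrac12\normx{x_{k+1}}_{E_{k+1}^i}+{e_{k+1}^i}^\prime x_{k+1}+f_{k+1}^i$. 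This yields a quadratic expression in $x_k$ and $u_k^i$ whose coefficients I would match against the Riccati-type recursions \eqref{eq:EEQ}, \eqref{eq:bEQ}, \eqref{eq:eEQ}, \eqref{eq:fEQ}.

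The key algebraic step is the completion of squares in $u_k^i$. Since $E_K^i=Q_K^i$ and $e_K^i=p_K^i$, $f_K^i=0$, the boundary term is $V_K^i=\tfrac12\normx{x_K}_{Q_K^i}+{p_K^i}^\prime x_K$, which matches the terminal cost in \eqref{eq:objective1}. For the running terms, after substitution the $u_k^i$-dependent part of $V_{k+1}^i-V_k^i$ has Hessian ${B_k^i}^\prime E_{k+1}^iB_k^i$; adding the stage cost contribution $\tfrac12\normx{u_k^i}_{R_k^{ii}}$ produces the Hessian $Y_k^i=R_k^{ii}+{B_k^i}^\prime E_{k+1}^iB_k^i$, which is invertible by Assumption \ref{ass:Yassumption}. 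Completing the square in $u_k^i$ then produces the term $\tfrac12\normx{u_k^i+y_k^i}_{Y_k^i}$ with $y_k^i=(Y_k^i)^{-1}{B_k^i}^\prime E_{k+1}^iA_kx_k+b_k^i$ precisely when $b_k^i$ solves \eqref{eq:bEQ}; the leftover quadratic-in-$x_k$ terms collapse via \eqref{eq:EEQ}, the linear-in-$x_k$ terms via \eqref{eq:eEQ} (here the multiplier terms $-{M_k^i}^\prime\mu_k^{i\star}$ in \eqref{eq:eEQ} and $-[N_k^i]_i^\prime\mu_k^{i\star}$ in \eqref{eq:bEQ} are exactly what is needed to generate the multiplier terms on the right-hand side of \eqref{eq:CostasV}), and the constants via \eqref{eq:fEQ}.

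Rearranging, I would obtain
\[
J^i(\bu^i,\bu^{-i})=V_0^i+\tfrac12\sum_{k\in\K_l}\normx{u_k^i+y_k^i}_{Y_k^i}+\sum_{k\in\K_l}{\mu_k^{i\star}}^\prime\big(M_k^ix_k+[N_k^i]_iu_k^i+\alpha_k^i+r_k^i\big),
\]
after recognizing that the extra terms $\tfrac12\sum_{j\in-i}\normx{u_k^j}_{R_k^{ij}}$ appearing in \eqref{eq:objective1} are carried along unchanged through the telescoping (they are absorbed into $f_k^i$ via \eqref{eq:fEQ}). I expect the main obstacle to be purely organizational: carefully tracking the cross terms between $x_k$, $u_k^i$, and the fixed quantities $\eta_k^i$, $\alpha_k^i$, $\mu_k^{i\star}$ so that each group of terms matches exactly one of the recursions \eqref{eq:EEQ}–\eqref{eq:fEQ}; in particular one must verify that the multiplier-dependent linear and constant terms reassemble into the clean complementarity-type expression $M_k^ix_k+[N_k^i]_iu_k^i+\alpha_k^i+r_k^i$ rather than leaving residual cross terms. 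No step requires anything beyond Assumption \ref{ass:Yassumption} (for invertibility of $Y_k^i$ and solvability of \eqref{eq:RiccatiBackward}) and the hypothesis that $\{\bm{x}^\star,\bm\lambda,\bm\mu^\star\}^1$ solves LCS1 (which supplies the admissible $\mu_k^{i\star}$ used as data in the recursions).
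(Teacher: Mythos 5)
Your proposal is correct and follows essentially the same route as the paper: a completion-of-squares/telescoping argument with $V_k^i$ as the storage function, where the multiplier terms $-{M_k^i}^\prime\mu_k^{i\star}$ and $-[N_k^i]_i^\prime\mu_k^{i\star}$ in \eqref{eq:eEQ} and \eqref{eq:bEQ} (equivalently, the paper's add-and-subtract of ${\mu_k^{i\star}}^\prime(M_k^ix_k+[N_k^i]_iu_k^i+\alpha_k^i+r_k^i)$) produce the constraint term in \eqref{eq:CostasV}, and \eqref{eq:EEQ}, \eqref{eq:fEQ} absorb the remaining quadratic, linear, and constant terms. The only difference is presentational: the paper writes out the intermediate algebra explicitly, while you describe the same bookkeeping at a higher level.
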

\begin{proof}
		The proof is based on the direct method (also referred to as the completion of squares). We outline the steps involved, as we adapt the proof of \cite[Theorem 2.1]{Abou:03} for the unconstrained case to a constrained setting. We compute the term $V_{k+1}^i - V_k^i$ as follows:
		\begin{align*}
			&V_{k+1}^i-V_k^i=-\tfrac{1}{2}\big(\normx{x_k}_{Q^i_k}+2{p_k^{i}}^{\prime}x_k+\sum_{j\in\N}\normx{u_k^{j}}_{R^{i j}_k}\big)+\tfrac{1}{2} {u_k^{i}}^{\prime}Y_k^iu_k^i \\
			&\quad+{u_k^{i}}^{\prime}\big({B_k^{i}}^{\prime}(E_{k+1}^i (A_k x_k+\eta_k^i)+e_{k+1}^i\big)+\tfrac{1}{2}x_k^{\prime}\big(A_k^{\prime}E_{k+1}^i A_k\\
			&\quad-E_k^i+Q^i_k\big) x_k+\big(A_k^{\prime}E_{k+1}^i\eta_k^i+A_k^{\prime}e_{k+1}^{i}+p_k^i-e_k^{i}\big)^{\prime}x_k\\
			&\quad+\tfrac{1}{2}{\eta_k^{i}}^{\prime}(E_{k+1}^i\eta_k^i+2e_{k+1}^i)+\tfrac{1}{2}\sum_{j\in -i}\normx{u_k^{j}}_{R^{i j}_k}+f_{k+1}^i-f_k^i.
		\end{align*}
		Then, we add and subtract the term ${\mu_{k}^{i\star}}^\prime \big(M_k^ix_k+[N_k^i]_i u_k^i+\alpha_k^i+r_k^i\big)$ on the right-hand-side of the above expression and rearrange a few terms in  the above expression as follows
		\begin{align*} 
			&V_{k+1}^i-V_k^i  =-\tfrac{1}{2}\big(\normx{x_k}_{Q^i_k}+2{p_k^i}^\prime x_k+\sum_{j\in\N}\normx{u_k^{j}}_{R^{i j}_k}\big)+\tfrac{1}{2} {u_k^{i}}^{\prime}Y_k^iu_k^i\\
			&\quad+{u_k^{i}}^{\prime}\big({B_k^{i}}^{\prime}(E_{k+1}^i (A_k x_k+\eta_k^i)+e_{k+1}^i {-[N^{i}_k]_i^\prime \mu_k^{i\star}}\big)+\tfrac{1}{2}x_k^{\prime}\big(A_k^{\prime}E_{k+1}^i A_k \\
			&\quad+Q^i_k-E_k^i\big) x_k+\big(A_k^{\prime}E_{k+1}^i\eta_k^i+A_k^{\prime}e_{k+1}^{i}+p_k^i-{M_k^{i}}^\prime \mu_k^{i\star}-e_k^{i}\big)^{\prime}x_k\\
			&\quad+f_{k+1}^i+\tfrac{1}{2}{\eta_k^{i}}^{\prime}(E_{k+1}^i\eta_k^i+2e_{k+1}^i)+\tfrac{1}{2}\sum_{j\in -i}\normx{u_k^{j}}_{R^{i j}_k} \\
			&\quad-{\mu_{k}^{i\star}}^\prime \big(\alpha_k^{i}+r_k^i\big)-f_k^i+{{\mu_{k}^{i\star}}^\prime \big(M_k^ix_k+[N_k^i]_i u_k^i+\alpha_k^i+r_k^i\big)}.
		\end{align*}
		Next, we perform completion of squares in the above expression and with a few algebraic calculations we get
		\begin{align*}
			&V_{k+1}^i-V_k^i
			=-\tfrac{1}{2}\big(\normx{x_k}_{Q^i_k}+2{p_k^{i}}^{\prime}x_k+\sum_{j\in\N}\normx{u_k^{j}}_{R^{i j}_k}\big)+\tfrac{1}{2}\normx{u_k^{i}+y_k^i}_{Y_k^i} \\
			&\quad+{\mu_{k}^{i\star}}^\prime \big(M_k^ix_k+[N_k^i]_i u_k^i+\alpha_k^i+r_k^i\big)%\\
            \end{align*}\begin{align*}
			&\quad+\tfrac{1}{2}x_k^{\prime}\big(A_k^{\prime}E_{k+1}^i A_k+Q^i_k-A_k^{\prime} E_{k+1}^i
			B_k^i \big(Y_k^i\big)^{-1}{B_k^{i}}^{\prime} E_{k+1}^i A_k-E_k^i\big) x_k\\
			&\quad+\big(A_k^{\prime}e_{k+1}^{i}+A_k^{\prime}E_{k+1}^i\eta_k^i-A_k^\prime E_{k+1}^i B_k^{i} b_k^i+p_k^i-{M_k^{i}}^\prime \mu_k^{i\star}-e_k^{i}\big)^{\prime}x_k\\
			&\quad+f_{k+1}^i+\tfrac{1}{2}{\eta_k^{i}}^{\prime}(E_{k+1}^i\eta_k^i+2e_{k+1}^i)\\
			&\quad+\tfrac{1}{2}\sum_{j\in -i}\normx{u_k^{j}}_{R^{i j}_k}-\tfrac{1}{2}\normx{b_k^{i}}_{Y_k^i}-{\mu_{k}^{i\star}}^\prime \big(\alpha_k^{i}+r_k^i\big)-f_k^i.
		\end{align*}
      Using \eqref{eq:EEQ}-\eqref{eq:RiccatiBackward} in the above equation, we get
		\begin{align*}
			V_{k+1}^i-V_k^i
			&=-\tfrac{1}{2}\big(\normx{x_k}_{Q^i_k}+2{p_k^{i}}^{\prime}x_k+\sum_{j\in\N}\normx{u_k^{j}}_{R^{i j}_k}\big)+\tfrac{1}{2}\normx{u_k^{i}+y_k^i}_{Y_k^i} \\
			&\quad+{\mu_{k}^{i\star}}^\prime \big(M_k^ix_k+[N_k^i]_i u_k^i+\alpha_k^i+r_k^i\big).
		\end{align*}
		Taking the telescopic sum of $V_{k+1}^i-V_k^i$ for all $k\in \K_l$, we obtain
		the relation \eqref{eq:CostasV}.
\end{proof}
%%%%%%
%
\begin{remark}
Although the objective function \eqref{eq:CostasV} appears to change with the constraints and $\mu_k^{i\star}$ due to the third term, the first term $V_0^i$ in \eqref{eq:CostasV}, defined by the recursive difference equations \eqref{eq:RiccatiBackward}, includes all the same terms with opposite signs. Consequently, the changes in the third term are canceled out. 
\end{remark}
\begin{remark} 
	Theorem \ref{thm:SufficentRiccati} extends \cite[Theorem 2.1]{Abou:03} to settings where players' strategy sets are interdependent, as characterized by the constraints \eqref{eq:constraints}. The result in \cite[Theorem 2.1]{Abou:03} can be recovered by setting \(\mu_k^{i\star} = 0\) for all \(k \in \K_l\) and \(i \in \N\) in Theorem \ref{thm:SufficentRiccati}. 
\end{remark}
\subsubsection{Related static game with coupled constraints} 
Since the information structure is open-loop, Theorem~\ref{thm:SufficentRiccati} allows us to derive from DGC an auxiliary static game with coupled constraints (SGCC), in which Player~$i$'s objective function is given by \eqref{eq:CostasV} and the players' feasible strategy space is defined by \eqref{eq:CConstraint}; see also \cite{Basar:1976, Abou:03} for the unconstrained case. 
Let $(\bar{\bu}^i,\bar{\bu}^{-i})\in R(x_0)$ be a  {generalized} Nash equilibrium associated with SGCC. This implies for each Player $i$ ($i\in \N$),  $\bar{\bu}^i$  solves the following constrained optimization problem
\begin{align}
	J^i(\bar{\bu}^i,\bar{\bu}^{-i})\leq J^i(\bu^i,\bar{\bu}^{-i}),~\forall \bu^i\in \U(\bar{\bu}^{-i}).
	\label{eq:conopt}
\end{align} 
The Lagrangian associated with Player $i$'s problem \eqref{eq:conopt} is given by
\begin{align}
	\bar{\mathsf L}_k^i	&=V_0^i+\tfrac{1}{2}\sum_{k\in\K_l}\normx{u_k^{i}+y_k^i}_{Y_k^i}\nonumber\\ 
	&\quad-\sum_{k\in\K_l}({\mu_k^i}-\mu_{k}^{i\star})^{\prime}(M_k^ix_k+[N_k^i]_i u_k^i+\bar{\alpha}_k^{i}+r_k^i).\label{eq:StaticLagrangian}
\end{align}
Here, the constrained set  $  \U(\bar{\bu}^{-i})$ is   given by $M_k^ix_k+[N_k^i]_i u_k^i+\bar{\alpha}_k^{i}+r_k^i \geq 0$, with $\bar{\alpha}^i_k=\sum_{j \in -i}[N^i_k]_j\bar{u}_k^j$ for all $k\in \K_l$. Using \eqref{eq:state}, the state variables $x_k$ in \eqref{eq:StaticLagrangian}
are written in terms of $\{x_0,~u_l^i,~ l<k,~i\in \N\}$ as follows:
\begin{align*}
	\bar{\mathsf L}_k^i	&=V_0^i+\tfrac{1}{2}\sum_{k\in\K_l}\normx{u_k^{i}+y_k^i}_{Y_k^i}-\sum_{k\in\K_l}({\mu_k^i}-\mu_{k}^{i\star})^{\prime}\Big(M_k^iA_{k-1}\cdots A_{0} x_0\\
	&\hspace{0.5in}+\big(\sum_{\tau=k+2}^{K-1}M_{\tau}^{i}A_{\tau-1}\cdots A_{k+1}B_k^i+M_{k+1}^{i}B_k^i+[N_k^i]_i\big) u_k^i\\
	&\hspace{0.5in}+\big(\sum_{\tau=k+2}^{K-1}M_{\tau}^{i}A_{\tau-1}\cdots A_{k+1}+M_{k+1}^{i}\big) \eta_k^i+\alpha_k^i+r_k^i\Big).
\end{align*}
Then, the  KKT conditions associated with  \eqref{eq:conopt}  are given by
\begin{subequations}\label{eq:staticLCP}
	\begin{align}
		& \bar{u}_k^{i}=-\bar{y}_k^{i}+(Y_k^i)^{-1}\beta_k^i, \label{eq:staticLCP1}\\
		& 0 \leq M_k^i\bar{x}_k+[N_k^i]_i\bar{u}_k^i+\bar{\alpha}_k^{i}+r_k^i\perp \bar{\mu}_k^{i} \geq 0,\label{eq:staticLCP2}
	\end{align}
\end{subequations}
where
\begin{subequations}\label{eq:muequation}
	\begin{align}
		\beta_k^i
		&={[N_k^i]_i}^{\prime}(\bar{\mu}_{k}^i-\mu_{k}^{i\star})+{B_{k+1}^i}^{\prime}{M_{k+1}^{i}}^{\prime}(\bar{\mu}_{k+1}^i-\mu_{k+1}^{i\star})\nonumber\\
		&\quad+\sum_{\tau=k+2}^{K-1}(A_{\tau-1}\cdots A_{k+1}B_k^i)^{\prime}{M_{\tau}^{i}}^{\prime}(\bar{\mu}_{\tau}^i-\mu_{\tau}^{i\star}),\label{eq:betadef}\\
		\bar{y}_k^{i}&=(Y_k^i)^{-1}{B_k^{i}}^{\prime}E_{k+1}^i A_k \bar{x}_k+b_k^i,\label{eq:ybeq}
	\end{align}
\end{subequations}
and the vectors   $\{b_k^i, k\in \K_l\}$ are obtained from \eqref{eq:bEQ}. Here, 
$\{\bar{\mu}_k^i\in\R^{c_i}_{+},~ k\in \K_l\}$ denotes the  set of optimal multipliers associated with the constraint $\U(\bar{\bu}^{-i})$ and  $\{\bar{x}_k, ~k\in \K\}$ denotes the state trajectory generated by the generalized Nash equilibrium strategy $(\bar{\bu}^i,\bar{\bu}^{-i})$.

Next, substituting for the control $\bar{u}_k^i$ from \eqref{eq:staticLCP1} in the state equation
\eqref{eq:state}, and using \eqref{eq:RiccatiBackward} and \eqref{eq:staticLCP}, the KKT conditions associated with minimization problems of all the players are collected
as the following discrete-time coupled linear complementarity system (LCS2) 
\begin{subequations}\label{eq:LCS2}
	\begin{align}
		& \bar{x}_{k+1} =\big(\mathbf{I}-B_k^i\big(Y_k^i\big)^{-1}{B_k^{i}}^\prime E_{k+1}^i\big) A_k \bar{x}_k-B_k^i b_k^i\nonumber\\
		&\hspace{0.75in}+B_k^i(Y_k^i)^{-1}\beta_k^i+\bar{\eta}_k^{i}, \label{eq:xeq2PBCC}\\
		&e_k^{i} =A_k^{\prime}e_{k+1}^{i}+A_k^{\prime}E_{k+1}^i \bar{\eta}_k^{i}-A_k^{\prime}E_{k+1}^i B_k^{i} b_k^i+p_k^i-{M_k^{i}}^\prime\mu_k^{i\star}, \label{eq:eeq2PBCC}\\
		&0 \leq  [N_k^i]_i(Y_k^i)^{-1}\beta_k^i+(M_k^i-[N_k^i]_i (Y_k^i)^{-1}{B_k^{i}}^\prime E_{k+1}^i A_k) \bar{x}_k\nonumber\\
		&\hspace{0.75in}-[N_k^i]_i b_k^i+\bar{\alpha}_k^{i}+r_k^i\perp \bar{\mu}_k^{i} \geq 0,\label{eq:ceq2PBCC}\\
		&Y_k^i b_k^i ={B_k^{i}}^\prime(E_{k+1}^i \bar{\eta}_k^{i}+e_{k+1}^i)-[N^{i}_k]_i^\prime\mu_{k}^{i\star},\label{eq:beq2PBCC}
	\end{align}
\end{subequations}
where  $\bar{\eta}_k^i=\sum_{j\in -i}B^j_k \bar{u}_k^j$ with the boundary conditions are $\bar{x}_0=x_0$, $e_K^i=p_K^i$ and $b_K^i=0$ for $i\in \N$. We denote the solution of LCS2 (with parameters $\bm{\mu}^\star$),  if it exists, by
\begin{align}\{\bm{\bar{x}}, \bm{e} , \bm{\bar{\mu}} ; \bm{b} \}^{2}_{\bm{\mu}^\star}:=\{\bar{x}_k, e_k^i, b_k^i, ~k \in\K,~ \bar{\mu}_k^i,~ k \in\K_l, ~i\in \N \}. \label{eq:LCS2sol}\end{align}  
From \eqref{eq:beq2PBCC}, we note that the variables $\bm{b}$ in \eqref{eq:LCS2sol} are dependent and derived from the independent variables $\{\bm{\bar{x}}, \bm{e} , \bm{\bar{\mu}}\}$. We define a solution of LCS2 with the following property.
\begin{definition}\label{def:consistant}
	A solution $\{\bm{\bar{x}}, \bm{e} , \bm{\bar{\mu}} ; \bm{b} \}^{2}_{\bm{\mu}^\star}$ of LCS2 is referred to as a \emph{consistent} solution of LCS2 if $\bm{\bar{\mu}}=\bm{\mu}^\star$.
\end{definition}
\begin{remark}\label{rem:consistent}
     Following Definition \ref{def:consistant} and \eqref{eq:betadef}, we note that a consistent solution of LCS2 is characterized by $\beta_k^i=0$ for all  $k\in \K_l$ and   $i\in \N$.
\end{remark}
 We note that the optimization problems \eqref{eq:dyncon} and \eqref{eq:conopt} are essentially the same, with the {latter} being a static representation of the former. Consequently,   the necessary conditions of these problems characterized by the solutions of  LCS1 and LCS2 must be related. The next two results establish this relationship. 
\begin{theorem}\label{th:TPBlink} 	
	Let Assumptions \ref{ass:GameAssumption} and \ref{ass:Yassumption} hold.  Let $\{\bm{x}^{\star}, \bm{\lambda}, \bm{\mu}^{\star}\}^{1}$ be a solution of LCS1, and using this,  construct the control sequence as
	\begin{subequations}\label{eq:DefPsiandb}
		\begin{align}
			\bar{u}_k^{i}=-(R^{ii}_k)^{-1}({B^{i}_k}^\prime\lambda_{k+1}^i-[N^{i}_k]_i^{\prime}\mu_{k}^{i\star}),~  k\in \K_l,\label{eq:ThmUdefn}
		\end{align}
		and define the following sequence as
		\begin{align}
			e_k^{i}&= \lambda_k^i-E_k^ix_k^{\star},~k\in \K, \label{eq:Eeq}\\
			b_k^{i}&=\big(Y_k^i\big)^{-1}\big({B_k^{i}}^\prime\big(E_{k+1}^i\sum_{j\in -i}B^j_k \bar{u}_k^{j}+e_{k+1}^{i}\big) \nonumber \\
			&\quad 
			-[N^{i}_k]_i^{\prime}\mu_{k}^{i\star}\big),~k\in \K_l, ~b_K^{i}=0,\label{eq:ThmbDef}
		\end{align}
	\end{subequations}
for all $i\in \N$.
	Then, this derived sequence $\{\bm{x}^{\star}, \bm{e}, \bm{\mu}^{ \star};\bm{b}\}^{2}_{\bm{\mu}^{ \star}}$ provides a consistent solution for LCS2 \eqref{eq:LCS2}.
\end{theorem}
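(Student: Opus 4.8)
The plan is to verify directly that the sequence $\{\bm{x}^{\star}, \bm{e}, \bm{\mu}^{\star};\bm{b}\}^{2}_{\bm{\mu}^{\star}}$ built from \eqref{eq:ThmUdefn}--\eqref{eq:ThmbDef} satisfies each of the four relations \eqref{eq:xeq2PBCC}--\eqref{eq:beq2PBCC} of LCS2 together with the stated boundary conditions, and that it is consistent in the sense of Definition~\ref{def:consistant}. Consistency is immediate: the multiplier component of the constructed sequence is $\bm{\mu}^{\star}$ itself, so $\bar{\mu}_k^i=\mu_k^{i\star}$, and hence by \eqref{eq:betadef} and Remark~\ref{rem:consistent} we have $\beta_k^i=0$ for all $k\in\K_l$, $i\in\N$. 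The boundary data are equally direct: $\bar{x}_0=x_0^{\star}=x_0$ and $b_K^i=0$ hold by construction, while $e_K^i=\lambda_K^i-E_K^i x_K^{\star}=(Q_K^i x_K^{\star}+p_K^i)-Q_K^i x_K^{\star}=p_K^i$ using the terminal conditions of LCS1 and $E_K^i=Q_K^i$. It then remains to check the three interior relations.

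The heart of the argument is a single algebraic identity, valid for every $k\in\K_l$ and $i\in\N$:
\begin{align*}
\bar{u}_k^i=-(Y_k^i)^{-1}{B_k^{i}}^\prime E_{k+1}^i A_k x_k^{\star}-b_k^i,
\end{align*}
which is exactly \eqref{eq:staticLCP1} and \eqref{eq:ybeq} with $\bar{x}_k$ replaced by $x_k^{\star}$ and $\beta_k^i=0$. To establish it I would start from its right-hand side, insert the definition \eqref{eq:ThmbDef} of $b_k^i$, use $A_k x_k^{\star}+\bar{\eta}_k^i=x_{k+1}^{\star}-B_k^i\bar{u}_k^i$ — which itself follows by comparing the LCS1 state equation \eqref{eq:OPstate} with the definition \eqref{eq:ThmUdefn} — and then multiply through by $Y_k^i=R_k^{ii}+{B_k^{i}}^\prime E_{k+1}^i B_k^i$. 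The term ${B_k^{i}}^\prime E_{k+1}^i B_k^i\bar{u}_k^i$ cancels, leaving $R_k^{ii}\bar{u}_k^i=-{B_k^{i}}^\prime(E_{k+1}^i x_{k+1}^{\star}+e_{k+1}^i)+[N_k^i]_i^\prime\mu_k^{i\star}$, and substituting $E_{k+1}^i x_{k+1}^{\star}+e_{k+1}^i=\lambda_{k+1}^i$ from \eqref{eq:Eeq} recovers \eqref{eq:ThmUdefn} identically.

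With this identity in hand the remaining checks are short. Relation \eqref{eq:beq2PBCC} is nothing but the definition \eqref{eq:ThmbDef}, since $\bar{\eta}_k^i=\sum_{j\in -i}B_k^j\bar{u}_k^j$. For the state relation \eqref{eq:xeq2PBCC}, substitute the identity for $\bar{u}_k^i$ into $x_{k+1}^{\star}=A_k x_k^{\star}+B_k^i\bar{u}_k^i+\bar{\eta}_k^i$, which yields $(\mathbf{I}-B_k^i(Y_k^i)^{-1}{B_k^{i}}^\prime E_{k+1}^i)A_k x_k^{\star}-B_k^i b_k^i+\bar{\eta}_k^i$, i.e.\ \eqref{eq:xeq2PBCC} with $\beta_k^i=0$. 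For the costate relation \eqref{eq:eeq2PBCC}, start from $e_k^i=\lambda_k^i-E_k^i x_k^{\star}$, use \eqref{eq:OPlambda} for $\lambda_k^i$, replace $\lambda_{k+1}^i$ by $e_{k+1}^i+E_{k+1}^i x_{k+1}^{\star}$ and $x_{k+1}^{\star}$ by $A_k x_k^{\star}+B_k^i\bar{u}_k^i+\bar{\eta}_k^i$, and eliminate the terms quadratic in $x_k^{\star}$ via the Riccati equation \eqref{eq:EEQ}; the residual matches \eqref{eq:eeq2PBCC} once the identity $\bar{u}_k^i+(Y_k^i)^{-1}{B_k^{i}}^\prime E_{k+1}^iA_k x_k^{\star}=-b_k^i$ is applied to the leftover term $A_k^\prime E_{k+1}^i B_k^i\bar{u}_k^i$. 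Finally, for the complementarity condition \eqref{eq:ceq2PBCC}, substituting the $\bar{u}$-identity shows its affine part equals $M_k^i x_k^{\star}+[N_k^i]_i\bar{u}_k^i+\bar{\alpha}_k^i+r_k^i=M_k^i x_k^{\star}+\sum_{j\in\N}[N_k^i]_j\bar{u}_k^j+r_k^i$, which, again via \eqref{eq:ThmUdefn}, is exactly the argument of the complementarity in \eqref{eq:OpconstraintComp}; since $\bar{\mu}_k^i=\mu_k^{i\star}$, relation \eqref{eq:ceq2PBCC} coincides with \eqref{eq:OpconstraintComp} and therefore holds.

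The only genuinely delicate point is the algebraic identity for $\bar{u}_k^i$ and, relatedly, the cancellation of the $x_k^{\star}$-quadratic terms in \eqref{eq:eeq2PBCC} through the Riccati recursion \eqref{eq:EEQ}; everything else is bookkeeping. A minor subtlety to flag is that $b_k^i$ is defined through a backward recursion coupled to $e_{k+1}^i$ and to $\bar{u}_k^j$, $j\in -i$, so the verification should run backward in $k$ from $k=K$ with the boundary data above, and Assumption~\ref{ass:Yassumption} is what guarantees the invertibility of $Y_k^i$ needed at each step.
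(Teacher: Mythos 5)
Your proposal is correct and follows essentially the same route as the paper: establish the key identity $\bar{u}_k^i=-(Y_k^i)^{-1}{B_k^{i}}^\prime E_{k+1}^i A_k x_k^{\star}-b_k^i$ from \eqref{eq:ThmUdefn}, \eqref{eq:Eeq}, \eqref{eq:ThmbDef} and the LCS1 state equation, then substitute it into \eqref{eq:xeq2PBCC}--\eqref{eq:beq2PBCC}, using the Riccati recursion \eqref{eq:EEQ} together with \eqref{eq:OPlambda} for the costate relation and consistency ($\bar{\mu}_k^i=\mu_k^{i\star}$, hence $\beta_k^i=0$) throughout. The only difference is the direction in which some identities are verified (e.g., you check the costate equation forward from the definition of $e_k^i$, whereas the paper works backward from the right-hand side of \eqref{eq:eeq2PBCC}), which is immaterial.
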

\begin{proof} We show that if we choose $\bm{\bar{x}}=\bm{x}^{\star}$, $\bm{\bar{\mu}}=\bm{\mu}^\star$, and construct $\bm{e}$, $\bm{b}$, and the controls using \eqref{eq:DefPsiandb}, then the derived sequence $\{\bm{x}^{\star}, \bm{e}, \bm{\mu}^{\star}; \bm{b}\}^{2}_{\bm{\mu}^{\star}}$ provides a consistent solution for LCS2.

	From \eqref{eq:ThmUdefn} and ${B_k^{i}}^\prime E_{k+1}^i(A_k x_k^{\star}+\sum_{j\in \N}B^j_k \bar{u}_k^{j}-x_{k+1}^{\star})=0$, we get
	\begin{multline*}
		\big(R^{ii}_k +{B_k^{i}}^\prime E_{k+1}^i B_k^i\big)\bar{u}_k^{i}+{B_k^{i}}^\prime E_{k+1}^i (A_k x_k^{\star}+\sum_{j\in -i}B^j_k \bar{u}_k^{j})\\
		-{B_k^{i}}^\prime E_{k+1}^ix_{k+1}^{\star}+{B_k^{i}}^\prime\lambda_{k+1}^i-
		[N^{i}_k]_i^{\prime}\mu_{k}^{i\star}=0.
	\end{multline*}
	Then, using $e_{k+1}^{i}=\lambda_{k+1}^i-E_{k+1}^ix_{k+1}^{\star}$ and $Y_k^i=R^{ii}_k +{B_k^{i}}^\prime E_{k+1}^i B_k^i$ in the above relation, we obtain 
	$Y_k^i \bar{u}_k^{i}+{B_k^{i}}^\prime E_{k+1}^i A_k x_k^{\star}
	+{B_k^{i}}^\prime\big(E_{k+1}^i\sum_{j\in -i}B^j_k \bar{u}_k^{j}+e_{k+1}^{i}\big)-[N^{i}_k]_i^{\prime}\mu_{k}^{i\star}= 0$, which results in
	\begin{align} 
		\bar{u}_k^{i}&=-(R^{ii}_k)^{-1}({B^{i}_k}^\prime\lambda_{k+1}^i-[N^{i}_k]_i^\prime \mu_{k}^{i\star})\nonumber\\
		&=-\big(Y_k^i\big)^{-1}{B_k^{i}}^\prime E_{k+1}^i A_k x_k^{\star}-b_k^{i}.~ (\text{from}~\eqref{eq:ThmbDef})\label{eq:controlUeqv}
	\end{align} 
	
		Using \eqref{eq:controlUeqv}, we can write \eqref{eq:OPstate} as follows
		\begin{subequations}\label{eq:LCS3}
			\begin{align}
				x_{k+1}^{\star} &=A_k x_k^{\star}-B^i_k\big(R^{ii}_k\big)^{-1}({B^{i}_k}^\prime\lambda_{k+1}^i-[N^{i}_k]_i^{\prime}\mu_{k}^{i\star})+\bar{\eta}_k^{i}\nonumber\\
				&=\big(\mathbf{I}-B_k^i\big(Y_k^i\big)^{-1}{B_k^{i}}^\prime E_{k+1}^i\big)A_k x_k^{\star}-B^i_k b_k^{i}+\bar{\eta}_k^{i},\label{eq:stateeqthm}
			\end{align}
			where $\bar{\eta}_k^{i}=\sum_{j\in -i}B^j_k \bar{u}_k^{j}$, $k\in \K_l$ and  $x_0^{\star}=x_0$.   We note that \eqref{eq:stateeqthm} verifies \eqref{eq:xeq2PBCC} for a consistent solution of LCS2, i.e., with $\beta_k^i=0,~\forall k\in\K_l$ (see Remark \ref{rem:consistent}). 		
			
				Next,  define $\Psi:=A_k^{\prime}e_{k+1}^{i}+A_k^{\prime}E_{k+1}^i \bar{\eta}_k^{i}-A_k^{\prime}E_{k+1}^i B_k^{i} b_k^{i}+p_k^i-{M_k^{i}}^\prime\mu_k^{i\star}$. Adding and subtracting the term $A_k^{\prime}E_{k+1}^i(A_k x_k^{\star}+B_k^i\bar{u}_k^i)$ and noting $\bar{\eta}_k^{i}=\sum_{j\in -i}B^j_k \bar{u}_k^{j}$,  we get 
			
				\begin{align*}
					&\Psi=A_k^{\prime}e_{k+1}^{i}+A_k^{\prime}E_{k+1}^i (A_k x_k^{\star}+\sum_{j\in \N}B^j_k \bar{u}_k^{j})-A_k^{\prime}E_{k+1}^i (A_k x_k^{\star}+B_k^{i} \bar{u}_k^{i})\\
					&\hspace{0.25in}-A_k^{\prime}E_{k+1}^i B_k^{i} b_k^{i}+p_k^i-{M_k^{i}}^\prime\mu_k^{i\star}\\
				%	&=A_k^{\prime}(e_{k+1}^{i}+E_{k+1}^i x_{k+1}^{\star})-A_k^{\prime}E_{k+1}^i (A_k x_k^{\star}+B_k^{i} (\bar{u}_k^{i}+b_k^{i}))+p_k^i-{M_k^{i}}^\prime\mu_k^{i\star}\\
					&=A_k^{\prime}\lambda_{k+1}^i-A_k^{\prime}E_{k+1}^i(A_k x_k^{\star} 
					- B_k^{i}\big(Y_k^i\big)^{-1}{B_k^{i}}^\prime E_{k+1}^i A_k x_{k}^{\star})+p_k^i-{M_k^{i}}^\prime\mu_k^{i\star}.
				\end{align*}
In the last step above,  we used \eqref{eq:Eeq} for $k+1$ and \eqref{eq:controlUeqv}. Finally, adding and subtracting $Q_k^ix_k^{\star}$ and rearranging the terms we get
				\begin{align*}
					\Psi&=\big(Q_k^ix_k^{\star}+A_k^{\prime}\lambda_{k+1}^i+p_k^i-{M_k^{i}}^\prime\mu_k^{i\star}\big)\\
					&\quad-\big(A_k^{\prime}E_{k+1}^i A_k+Q^i_k-A_k^{\prime}E_{k+1}^i
					B_k^i \big(Y_k^i\big)^{-1}{B_k^{i}}^\prime E_{k+1}^i A_k\big)x_k^{\star}.
			\end{align*}
			Using \eqref{eq:OPlambda} and \eqref{eq:EEQ}, we obtain $\Psi=\lambda_{k}^i-E_{k}^ix_k^{\star}=e_k^{i}$, which implies
			\begin{equation}
				e_k^{i}=A_k^{\prime}e_{k+1}^{i}+A_k^{\prime}E_{k+1}^i \bar{\eta}_k^{i}-A_k^{\prime}E_{k+1}^i B_k^{i} b_k^{i}+p_k^i-{M_k^{i}}^\prime\mu_k^{i\star}, \label{eq:backwdeq}
			\end{equation}
			with the boundary condition $e_K^{i}= \lambda_K^i-E_K^ix_K^{\star}= Q_K^ix_K^{\star}+p_K^{i}-Q^i_K x_K^{\star}= p_K^{i}$. So, \eqref{eq:backwdeq} verifies the backward equation \eqref{eq:eeq2PBCC}.	 
			
			Using $\bar{\alpha}^i_k=\sum_{j \in -i}[N^i_k]_j\bar{u}_k^j$ and \eqref{eq:controlUeqv}, \eqref{eq:OpconstraintComp} can be written as
			\begin{align}
				&0 \leq M_k^ix_k^{\star}-[N_k^i]_i\big(Y_k^i\big)^{-1}{B_k^{i}}^\prime E_{k+1}^i A_k x_{k}^{\star}\notag\\ &\qquad -[N_k^i]_ib_k^{i}+\bar{\alpha}_k^{i}+r_k^i\perp \mu_k^{i\star} \geq 0,\label{eq:CCeqv}
			\end{align}
			and this verifies  \eqref{eq:ceq2PBCC} for a consistent solution of LCS2.
			Finally, using $\bar{\eta}_k^{i}=\sum_{j\in -i}B^j_k \bar{u}_k^{j}$, \eqref{eq:ThmbDef}, is written as
			\begin{align}
				Y_k^i b_k^{i} ={B_k^{i}}^\prime(E_{k+1}^i \bar{\eta}_k^{i}+e_{k+1}^{i}\big)-[N^{i}_k]_i^\prime\mu_{k}^{i\star},
			\end{align}
		\end{subequations}
		which verifies \eqref{eq:beq2PBCC}. So, along with the  boundary conditions $x_0^{\star}=x_0$ and $e_K^{i}= p_K^{i}$, the equations \eqref{eq:LCS3} are exactly same as those which characterize LCS2  given by \eqref{eq:LCS2}.  This implies, that  the derived sequence $\{\bm{x}^{\star}, \bm{e}, \bm{\mu}^{\star}; \bm{b}\}^{2}_{\bm{\mu}^{\star}}$ is  a consistent solution of LCS2.  
	\end{proof}
 
The next result provides a converse to Theorem \ref{th:TPBlink}.
\begin{theorem}\label{th:TPBlink2} 	
	Let Assumptions \ref{ass:GameAssumption} and \ref{ass:Yassumption} hold. Let  $\{\bm{\bar{x}}, \bm{e}, \bm{\bar{\mu}}; \bm{b}\}^{2}_{\bm{\bar{\mu}}}$ be a consistent  solution of \textrm{LCS2}, and using this, for all $i\in \N$, construct the control sequence as
	\begin{subequations}\label{eq:DefLambda}
		\begin{align}
			u_k^{i\star}=-\big(Y_k^i\big)^{-1}{B_k^{i}}^\prime E_{k+1}^i A_k \bar{x}_k-b_k^{i},~  k\in \K_l,\label{eq:ConvThmUdefn}
		\end{align}
		and the co-state sequence  as
		\begin{align}
			\lambda_k^i:=E_k^i\bar{x}_k+e_k^i,~ k\in\K.\label{eq:Lambdaeq}
		\end{align}
	\end{subequations}
      Then, this derived sequence $\{\bm{\bar{x}}, \bm{\lambda}, \bm{\bar{\mu}}\}^{1}$ is a solution for LCS1 \eqref{eq:LQKKT}.
\end{theorem}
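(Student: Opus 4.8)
The plan is to reverse the chain of computations carried out in the proof of Theorem~\ref{th:TPBlink}. We are given a consistent solution $\{\bm{\bar x},\bm e,\bm{\bar\mu};\bm b\}^2_{\bm{\bar\mu}}$ of LCS2, so by Remark~\ref{rem:consistent} we may set $\beta_k^i=0$ and $\bar\mu_k^i=\mu_k^{i\star}$ throughout; moreover, in the consistent case the controls $\bar u_k^j$ recovered from \eqref{eq:staticLCP1} coincide with the sequence $u_k^{j\star}$ defined by \eqref{eq:ConvThmUdefn}, so that $\bar\eta_k^i=\sum_{j\in-i}B_k^j u_k^{j\star}$. Writing $x_k^\star:=\bar x_k$ and taking $\lambda_k^i$ from \eqref{eq:Lambdaeq}, we must verify the three relations \eqref{eq:OPstate}--\eqref{eq:OpconstraintComp} of LCS1 together with the boundary data $x_0^\star=x_0$ and $\lambda_K^i=Q_K^ix_K^\star+p_K^i$.

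First I would establish the propagation identity $\bar x_{k+1}=A_k\bar x_k+\sum_{j\in\N}B_k^ju_k^{j\star}$: substituting \eqref{eq:ConvThmUdefn} into the consistent state recursion \eqref{eq:xeq2PBCC} (with $\beta_k^i=0$) collapses its right-hand side to $A_k\bar x_k+B_k^iu_k^{i\star}+\bar\eta_k^i$. Next comes what I expect to be the key step: recovering the ``primal'' form \eqref{eq:OLNE}, namely $u_k^{i\star}=-(R_k^{ii})^{-1}({B_k^i}^\prime\lambda_{k+1}^i-[N_k^i]_i^\prime\mu_k^{i\star})$. To do this I would multiply \eqref{eq:ConvThmUdefn} by $Y_k^i$, substitute \eqref{eq:beq2PBCC} for $Y_k^ib_k^i$, expand $Y_k^i=R_k^{ii}+{B_k^i}^\prime E_{k+1}^iB_k^i$, and collect the $E_{k+1}^i$-terms into $E_{k+1}^i(A_k\bar x_k+B_k^iu_k^{i\star}+\bar\eta_k^i)=E_{k+1}^i\bar x_{k+1}$ via the propagation identity; then \eqref{eq:Lambdaeq} at stage $k+1$ gives $E_{k+1}^i\bar x_{k+1}+e_{k+1}^i=\lambda_{k+1}^i$, leaving precisely \eqref{eq:OLNE}. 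With \eqref{eq:OLNE} available, \eqref{eq:OPstate} is just the propagation identity rewritten, and \eqref{eq:OpconstraintComp} follows from \eqref{eq:ceq2PBCC}: the combination $(M_k^i-[N_k^i]_i(Y_k^i)^{-1}{B_k^i}^\prime E_{k+1}^iA_k)\bar x_k-[N_k^i]_ib_k^i$ equals $M_k^i\bar x_k+[N_k^i]_iu_k^{i\star}$ by \eqref{eq:ConvThmUdefn}, adding $\bar\alpha_k^i+r_k^i=\sum_{j\in-i}[N_k^i]_ju_k^{j\star}+r_k^i$ and applying \eqref{eq:OLNE} for every $j$ reproduces the argument of the complementarity in \eqref{eq:OpconstraintComp}, and $\bar\mu_k^i=\mu_k^{i\star}$ takes care of the multiplier.

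To obtain \eqref{eq:OPlambda} I would substitute the Riccati equation \eqref{eq:EEQ} for $E_k^i$ and the backward equation \eqref{eq:eeq2PBCC} for $e_k^i$ into $\lambda_k^i=E_k^i\bar x_k+e_k^i$, group all the $A_k^\prime(\cdot)$ terms, and recognize the bracket as $E_{k+1}^i(A_k\bar x_k+B_k^iu_k^{i\star}+\bar\eta_k^i)+e_{k+1}^i=E_{k+1}^i\bar x_{k+1}+e_{k+1}^i=\lambda_{k+1}^i$ (again using the propagation identity, \eqref{eq:ConvThmUdefn}, and \eqref{eq:Lambdaeq}); what remains is exactly $\lambda_k^i=Q_k^i\bar x_k+p_k^i+A_k^\prime\lambda_{k+1}^i-{M_k^i}^\prime\mu_k^{i\star}$. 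The boundary conditions are immediate: $\bar x_0=x_0$ from LCS2, and evaluating \eqref{eq:Lambdaeq} at $k=K$ with $E_K^i=Q_K^i$ and $e_K^i=p_K^i$ yields $\lambda_K^i=Q_K^ix_K^\star+p_K^i$. The main obstacle is the control-equivalence step, since that is where the algebraic structure $Y_k^i=R_k^{ii}+{B_k^i}^\prime E_{k+1}^iB_k^i$ and the consistency hypothesis $\beta_k^i=0$ must be used jointly; once \eqref{eq:OLNE} is recovered, the remaining verifications are bookkeeping that mirrors the proof of Theorem~\ref{th:TPBlink} in reverse.
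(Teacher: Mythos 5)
Your proposal is correct and follows essentially the same route as the paper's proof: establish that the $Y_k^i$-form control \eqref{eq:ConvThmUdefn} equals the $R_k^{ii}$-form \eqref{eq:OLNE} (using \eqref{eq:beq2PBCC}, $Y_k^i=R_k^{ii}+{B_k^i}^{\prime}E_{k+1}^iB_k^i$, the state propagation identity, and \eqref{eq:Lambdaeq} at $k+1$), then verify \eqref{eq:OPstate}, \eqref{eq:OPlambda} via \eqref{eq:EEQ} and \eqref{eq:eeq2PBCC}, and \eqref{eq:OpconstraintComp} from \eqref{eq:ceq2PBCC} with $\bar{\mu}_k^i=\mu_k^{i\star}$ and $\beta_k^i=0$, closing with the boundary conditions. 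Your only deviation is cosmetic: you make explicit, as a preliminary step, the propagation identity $\bar{x}_{k+1}=A_k\bar{x}_k+\sum_{j\in\N}B_k^ju_k^{j\star}$, which the paper invokes implicitly by citing \eqref{eq:state}.
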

\begin{proof} 
	 We show that  if we choose $\bm{x}^{\star}=\bm{\bar{x}}$, $\bm{\mu}^\star=\bm{\bar{\mu}}$ and construct $\bm{\lambda}$ and the controls using \eqref{eq:DefLambda}, then the derived sequence $\{\bm{\bar{x}}, \bm{\lambda}, \bm{\bar{\mu}}\}^{1}$ provides a solution for LCS1, that is, they satisfy \eqref{eq:LQKKT}.
	
 Using definition of $b_k^i$ from \eqref{eq:beq2PBCC} with $\eta_k^{i\star}=\sum_{j\in -i}B^j_k u_k^{j\star}$, \eqref{eq:ConvThmUdefn} is written as $
		Y_k^i u_k^{i\star}+{B_k^{i}}^\prime E_{k+1}^i A_k \bar{x}_k+{B_k^{i}}^\prime\big(E_{k+1}^i\eta_k^{i\star}+e_{k+1}^{i}\big)-[N^{i}_k]_i^\prime \bar{\mu}_{k}^{i}= 0$.
	Next, using  $Y_k^i=R^{ii}_k +{B_k^{i}}^\prime E_{k+1}^i B_k^i$,    $e_{k+1}^i=\lambda_{k+1}^i-E_{k+1}^i\bar{x}_{k+1}$, and writing the state equation \eqref{eq:state} as $A_k \bar{x}_k+\sum_{j\in \N}B^j_k u_k^{j\star}-\bar{x}_{k+1}=0$, the previous equation simplifies to 
	\begin{align}
		u_k^{i\star}&=-\big(Y_k^i\big)^{-1}{B_k^{i}}^\prime E_{k+1}^i A_k \bar{x}_k-b_k^{i},~ (\text{from } \eqref{eq:ConvThmUdefn})\nonumber\\
		&=-(R^{ii}_k)^{-1}({B^{i}_k}^\prime\lambda_{k+1}^i-[N^{i}_k]_i^\prime \bar{\mu}_{k}^{i}).\label{eq:controlUeqv2}
	\end{align}
From Remark \ref{rem:consistent}, for a consistent solution of LCS2, we have $\beta_k^i=0$ for all  $k\in \K_l$ and $i\in \N$. Then, using \eqref{eq:controlUeqv2} in  \eqref{eq:xeq2PBCC}, we get
	\begin{subequations}\label{eq:LCS6}
		\begin{align}
			\bar{x}_{k+1} &=\big(\mathbf{I}-B_k^i\big(Y_k^i\big)^{-1}{B_k^{i}}^\prime E_{k+1}^i\big) A_k \bar{x}_k-B_k^i b_k^i+\sum_{j\in -i}B^j_k u_k^{j\star}\nonumber\\
			&=A_k \bar{x}_k-\sum_{j\in\N}{B}^{j}_k (R_k^{jj})^{-1}\big({B^{j}_k}^\prime\lambda_{k+1}^j-[N^{j}_k]_j^\prime \bar{\mu}_{k}^{j}\big),
		\end{align}
		with initial condition $\bar{x}_0=x_0$, and this verifies \eqref{eq:OPstate}. Next, define $\Phi=Q_k^i\bar{x}_k+p_k^{i}+A^\prime_k\lambda_{k+1}^{i}-{M^{i}_k}^\prime \bar{\mu}_{k}^{i}$. Using \eqref{eq:Lambdaeq}, we get 
			\begin{align*}
				\Phi&=Q_k^i\bar{x}_k+p_k^{i}+A^\prime_k(E_{k+1}^i\bar{x}_{k+1}+e_{k+1}^i)-{M^{i}_k}^\prime \bar{\mu}_{k}^{i}\\
				&=Q_k^i\bar{x}_k+A^\prime_k(E_{k+1}^i(\bar{x}_{k+1}-(A_k \bar{x}_k+B_k^iu_k^{i\star}))+e_{k+1}^i)
				+p_k^{i}-{M^{i}_k}^\prime \bar{\mu}_{k}^{i}\\
				&\quad+A_k^{\prime}E_{k+1}^i(A_k \bar{x}_k+B_k^iu_k^{i\star})\\
				&=Q_k^i\bar{x}_k+A^\prime_k(E_{k+1}^i\eta_k^{i\star}+e_{k+1}^i)
				+p_k^{i}-{M^{i}_k}^\prime \bar{\mu}_{k}^{i}\\
				&\quad+A_k^{\prime}E_{k+1}^i(A_k \bar{x}_k-B_k^i(\big(Y_k^i\big)^{-1}{B_k^{i}}^\prime E_{k+1}^i A_k \bar{x}_k-b_k^{i})).
			\end{align*}
			In the last step, we used $\bar{x}_{k+1}-(A_k \bar{x}_k+B_k^iu_k^{i\star})=\sum_{j\in-i}B_k^ju_k^{j\star}=\eta_k^{i\star}$ and \eqref{eq:controlUeqv2}. Next, rearranging the above terms, we obtain
			\begin{align*}
				\Phi&=\big(A_k^{\prime}E_{k+1}^i A_k+Q^i_k-A_k^{\prime}E_{k+1}^i
				B_k^i \big(Y_k^i\big)^{-1}{B_k^{i}}^\prime E_{k+1}^i A_k\big)\bar{x}_k
				\\
				&\quad+(A_k^{\prime}e_{k+1}^{i}+A_k^{\prime}E_{k+1}^i \eta_k^{i\star}-A_k^{\prime}E_{k+1}^i B_k^{i} b_k^{i}+p_k^i-{M_k^{i}}^\prime \bar{\mu}_{k}^{i}).
		\end{align*}	
 Using \eqref{eq:EEQ} and \eqref{eq:eeq2PBCC}, we obtain $\Phi=E_k^i\bar{x}_k+e_k^i=\lambda_{k}^i$, which implies
		\begin{align}
			\lambda_k^i &= Q_k^i\bar{x}_k+p_k^{i}+A^\prime_k\lambda_{k+1}^{i}-{M^{i}_k}^\prime \bar{\mu}_{k}^{i},
		\end{align}
		with the boundary condition $\lambda_K^i=E_K^i\bar{x}_K+e_K^{i}= Q_K^i\bar{x}_K+p_K^{i}$, which verifies \eqref{eq:OPlambda}. Finally, using $\alpha^{i\star}_k=\sum_{j \in -i}[N^i_k]_ju_k^{i\star}$, $\bm{\bar{\mu}}=\bm{\mu}^{\star}$, $\beta_k^i=0$ and \eqref{eq:controlUeqv2}, we can write   \eqref{eq:ceq2PBCC} as follows
		\begin{align}
			0 \leq M_k^i\bar{x}_k-\sum_{j\in\N} [N^j_k]_j (R_k^{jj})^{-1}\big({B^{j}_k}^\prime\lambda_{k+1}^j-[N^{j}_k]_j^\prime\bar{\mu}_{k}^{j} \big)   +r_k^i\perp \bar{\mu}_{k}^{i}\geq 0,
		\end{align}
	\end{subequations}
which verifies \eqref{eq:OpconstraintComp}. 
So, along with the  boundary conditions $\bar{x}_0=x_0$ and $\lambda_K^i= Q_K^i\bar{x}_K+p_K^{i}$, the equations \eqref{eq:LCS6} are exactly same as those which characterize LCS1  given by \eqref{eq:LQKKT}.  This implies, that  the derived sequence $\{\bm{\bar{x}}, \bm{\lambda}, \bm{\bar{\mu}}\}^{1}$ is  a   solution of LCS1.
\end{proof}
 The next result establishes a relation between the controls synthesized from the solutions of LCS1 and LCS2. 
\begin{corollary}\label{cor:uequiv} 
Let Assumptions \ref{ass:GameAssumption} and \ref{ass:Yassumption} hold. For a solution of LCS1, the control action defined by \eqref{eq:OLNE} is the same as the control action \eqref{eq:staticLCP1} obtained using the derived consistent solution of LCS2 (and vice versa).
	
\end{corollary}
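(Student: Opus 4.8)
The plan is to recognize that this corollary is essentially a bookkeeping consequence of the two intermediate control identities already established inside the proofs of Theorems~\ref{th:TPBlink} and~\ref{th:TPBlink2}; the proof therefore amounts to pointing at those lines and checking that the \emph{consistency} qualification makes the two control formulas coincide term by term.

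First I would recall, via Remark~\ref{rem:consistent}, that a consistent solution of LCS2 satisfies $\beta_k^i=0$ for every $k\in\K_l$ and $i\in\N$. Substituting this into \eqref{eq:staticLCP1} and using the definition \eqref{eq:ybeq} of $\bar y_k^i$, the static-game control reduces to
\[
	\bar u_k^{i}=-\bar y_k^{i}=-\big(Y_k^i\big)^{-1}{B_k^{i}}^\prime E_{k+1}^i A_k \bar x_k-b_k^{i},
\]
which is the form to be compared with \eqref{eq:OLNE}.

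For the forward implication I would start from a solution $\{\bm{x}^{\star},\bm{\lambda},\bm{\mu}^{\star}\}^{1}$ of LCS1 and apply Theorem~\ref{th:TPBlink}, which produces the consistent solution of LCS2 with $\bm{\bar x}=\bm{x}^{\star}$ and $\bm b$ given by \eqref{eq:ThmbDef}. Then I would simply invoke the displayed chain \eqref{eq:controlUeqv} obtained in that proof, which states precisely that
\[
	-(R^{ii}_k)^{-1}\big({B^{i}_k}^\prime\lambda_{k+1}^i-[N^{i}_k]_i^\prime \mu_{k}^{i\star}\big)
	=-\big(Y_k^i\big)^{-1}{B_k^{i}}^\prime E_{k+1}^i A_k x_k^{\star}-b_k^{i};
\]
the left side is \eqref{eq:OLNE} and, by the reduction above, the right side is \eqref{eq:staticLCP1} evaluated at the consistent solution. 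For the converse I would start from a consistent solution $\{\bm{\bar x},\bm e,\bm{\bar\mu};\bm b\}^{2}_{\bm{\bar\mu}}$ of LCS2, apply Theorem~\ref{th:TPBlink2} to obtain the LCS1 solution via the co-state formula \eqref{eq:Lambdaeq}, and quote identity \eqref{eq:controlUeqv2}, namely
\[
	-\big(Y_k^i\big)^{-1}{B_k^{i}}^\prime E_{k+1}^i A_k \bar x_k-b_k^{i}
	=-(R^{ii}_k)^{-1}\big({B^{i}_k}^\prime\lambda_{k+1}^i-[N^{i}_k]_i^\prime \bar\mu_{k}^{i}\big),
\]
which again equates \eqref{eq:staticLCP1} with \eqref{eq:OLNE}.

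There is essentially no technical obstacle; the only point worth flagging is that the equivalence genuinely uses the consistency hypothesis ($\bm{\bar\mu}=\bm{\mu}^{\star}$, equivalently $\beta_k^i=0$): without it, \eqref{eq:staticLCP1} carries the extra term $(Y_k^i)^{-1}\beta_k^i$ and the two control laws differ. I would therefore write the corollary's proof as a two-line appeal to \eqref{eq:controlUeqv} and \eqref{eq:controlUeqv2}, preceded by a single sentence recording the $\beta_k^i=0$ reduction of \eqref{eq:staticLCP1}.
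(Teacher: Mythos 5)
Your proposal is correct and follows essentially the same route as the paper's own proof: invoke Theorem \ref{th:TPBlink} together with the identity \eqref{eq:controlUeqv} for the forward direction, Theorem \ref{th:TPBlink2} together with \eqref{eq:controlUeqv2} for the converse, and use Remark \ref{rem:consistent} (consistency, i.e.\ $\beta_k^i=0$) to reduce \eqref{eq:staticLCP1} to the form $\bar u_k^i=-\bar y_k^i$. Your explicit flagging of where consistency is genuinely needed is a nice touch, but the argument itself matches the paper's.
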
 
\begin{proof} Let $\{\bm{x}^{\star}, \bm{\lambda}, \bm{\mu}^{\star}\}^{1}$ be a solution of LCS1. Then 
	from \eqref{eq:OLNE} 
	$u_k^{i\star}:=-(R_k^{ii})^{-1}\big({B^{i}_k}^\prime\lambda_{k+1}^i-[N^{i}_k]_i^\prime \mu_{k}^{i\star}\big)$ is the candidate GOLNE strategy synthesized using this solution.
	Recall, from Theorem \ref{th:TPBlink},  $\{\bm{x}^{\star}, \bm{\lambda}, \bm{\mu}^{\star}\}^{1}$   provides a consistent solution $\{\bm{x}^{\star}, \bm{e}^{\star} , \bm{\mu}^{\star}; \bm{b}^{\star}\}^{2}_{\bm{\mu}^{ \star}}$ for LCS2. Further, from  \eqref{eq:controlUeqv} in the proof of Theorem \ref{th:TPBlink} we have 
	\begin{align}
		u_k^{i\star}&=-(R^{ii}_k)^{-1}({B^{i}_k}^\prime\lambda_{k+1}^i-[N^{i}_k]_i^\prime \mu_{k}^{i\star})\nonumber\\
		&=-\big(Y_k^i\big)^{-1}{B_k^{i}}^\prime E_{k+1}^i A_k x_k^{\star}-b_k^{i\star}=\bar{u}_k^i,\label{eq:CorUeqv} 
	\end{align}
	which is exactly the candidate equilibrium strategy $\bar{u}_k^i$, given by \eqref{eq:staticLCP1} and \eqref{eq:ybeq},
	   synthesized using the (derived) consistent solution $\{\bm{x}^{\star}, \bm{e}^{\star}, \bm{\mu}^{\star}; \bm{b}^{\star}\}^{2}_{\bm{\mu}^{\star}}$ of LCS2; recall also Remark \ref{rem:consistent}. The proof in the other direction follows with similar arguments as above, and from \eqref{eq:controlUeqv2} in the proof of Theorem \ref{th:TPBlink2}.
\end{proof}

\subsubsection{Main result}
Using the equivalence between the necessary conditions \eqref{eq:LQKKT} and \eqref{eq:LCS2}, we are now ready to state the sufficient conditions under which  a solution of LCS1 is  a GOLNE. The next lemma uses an existence result from   static games with coupled constraints \cite[Theorem 1]{Rosen:65} adapted to SGCC.
\begin{lemma} \label{eq:rosenconvex}
	Let Assumptions \ref{ass:GameAssumption}  and \ref{ass:Yassumption} hold. Let the matrices $\{Y_k^i,~k\in \K_l,~i\in \N\}$, as defined in Assumption  \ref{ass:Yassumption}, be positive-definite. Then, SGCC is a convex game and a GOLNE exists for SGCC.
\end{lemma}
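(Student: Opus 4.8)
The plan is to cast SGCC as a concave game in the sense of \cite[Theorem 1]{Rosen:65} and read off the existence of a generalized Nash equilibrium from that theorem. What has to be checked is: (a) the common, coupled strategy set $R(x_0)$ is nonempty, convex, closed and bounded; (b) each $J^i$ is jointly continuous on it; and (c) for every fixed $\bu^{-i}$ the map $\bu^i\mapsto J^i(\bu^i,\bu^{-i})$ is convex (equivalently $-J^i$ is concave in player $i$'s own variable).

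Items (a) and (b) are immediate. By Assumption \ref{ass:GameAssumption}.\ref{item:1}, $\Omega\neq\emptyset$, so $R(x_0)\neq\emptyset$ for every $x_0\in\X_0$, and $R(x_0)$ is bounded; since the joint constraint \eqref{eq:vectorC1} is a finite family of affine inequalities, $R(x_0)$ is a polyhedron, hence closed and convex --- exactly the observation already recorded right after Assumption \ref{ass:GameAssumption}. Thus $R(x_0)$ is a nonempty, convex, compact set, and it is the common strategy space of SGCC. Continuity of $J^i$ on $R(x_0)$ is clear because the representation \eqref{eq:CostasV} (valid on $R(x_0)$ by Theorem \ref{thm:SufficentRiccati}) is polynomial in the controls.

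For (c) I would work directly with \eqref{eq:CostasV}. Fix $\bu^{-i}$ and $x_0$. Iterating \eqref{eq:state} shows each $x_k$ depends affinely on $\bu^i$, the $x_0$- and $\bu^{-i}$-dependent parts being absorbed into the constant term. Inspecting \eqref{eq:EEQ}--\eqref{eq:RiccatiBackward}, the Riccati matrices $E_k^i$ depend only on the problem data, while $e_k^i$, $f_k^i$ and $b_k^i$ depend only on the data, on $\bu^{-i}$ (through $\eta_k^i$, $\alpha_k^i$ and $\sum_{j\in-i}\normx{u_k^j}_{R_k^{ij}}$) and on the fixed multipliers $\mu_k^{i\star}$; none of them involves $\bu^i$. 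Hence $V_0^i$ is constant in $\bu^i$; the quantity $y_k^i=(Y_k^i)^{-1}{B_k^i}^\prime E_{k+1}^iA_kx_k+b_k^i$ is affine in $\bu^i$, so $u_k^i+y_k^i$ is affine in $\bu^i$, and since $Y_k^i\succ0$ by hypothesis, $\tfrac12\normx{u_k^i+y_k^i}_{Y_k^i}$ is convex in $\bu^i$ (a positive-definite quadratic form precomposed with an affine map); and the last term $\sum_{k\in\K_l}{\mu_k^{i\star}}^\prime(M_k^ix_k+[N_k^i]_iu_k^i+\alpha_k^i+r_k^i)$ is affine in $\bu^i$. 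Summing a constant, convex quadratics and an affine function, $J^i(\cdot,\bu^{-i})$ is convex; since $i\in\N$ was arbitrary, SGCC is a convex game.

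It then remains to invoke \cite[Theorem 1]{Rosen:65}: a concave $N$-person game (payoffs $-J^i$) on a nonempty, convex, compact joint strategy set possesses an equilibrium point, i.e., a profile $(\bar{\bu}^i,\bar{\bu}^{-i})\in R(x_0)$ satisfying \eqref{eq:conopt} for every $i\in\N$ --- a GOLNE for SGCC. I expect the only delicate point to be the convexity of $\tfrac12\normx{u_k^i+y_k^i}_{Y_k^i}$: it rests on $Y_k^i\succ0$, which is strictly stronger than the invertibility supplied by Assumption \ref{ass:Yassumption} and is precisely why positive definiteness of $Y_k^i$ is imposed here; relatedly, one must carefully verify that $V_0^i$ and $b_k^i$ (hence the whole $\mu_k^{i\star}$-dependent part of \eqref{eq:CostasV}) are genuinely free of $\bu^i$, so that the apparently problematic multiplier terms enter only affinely rather than as a source of nonconvexity.
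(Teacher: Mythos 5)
Your proposal is correct and follows essentially the same route as the paper: verify that $R(x_0)$ is nonempty, convex, closed and bounded (Assumption \ref{ass:GameAssumption}.\ref{item:1} plus the affine structure of \eqref{eq:vectorC1}), use positive definiteness of $\{Y_k^i\}$ together with the representation \eqref{eq:CostasV} to get convexity of $J^i$ in $\bu^i$, and invoke \cite[Theorem 1]{Rosen:65}. The only difference is that you spell out in detail why $V_0^i$, $b_k^i$ and the multiplier term enter affinely (or not at all) in $\bu^i$, which the paper leaves implicit (it even asserts strict convexity), but your weaker convexity claim suffices for Rosen's existence result.
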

\begin{proof} 
	Under Assumption \ref{ass:GameAssumption}.\ref{item:1} the feasible strategy space $R(x_0)$ is non-empty, convex, closed and bounded. Due to positive-definiteness of the matrices $\{Y_k^i,~k\in \K_l\}$, Player $i$'s objective function \eqref{eq:CostasV} is a strictly convex function in $\bu^i$. Then, from \cite[Theorem 1]{Rosen:65}, SGCC  is a convex game, and  consequently, a GOLNE exists for SGCC, which is synthesized  from a solution of LCS2 using \eqref{eq:staticLCP1} and \eqref{eq:ybeq}.
\end{proof} 
\begin{theorem}\label{thm:Nashexistence}
	Let Assumptions \ref{ass:GameAssumption}  and \ref{ass:Yassumption} hold. Let the matrices $\{Y_k^i,k\in \K_l,i\in \N\}$ be positive-definite. Let  $\{\bm{x}^{\star}, \bm{\lambda}, \bm{\mu}^{\star}\}^{1}$ be a solution of \textrm{LCS1}, then $\{u_k^{i\star},k\in\K_l,i\in \N\}$, given by \eqref{eq:OLNE} is a GOLNE for DGC.
\end{theorem}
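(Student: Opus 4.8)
The plan is to route through the auxiliary static game SGCC, since under the open-loop information structure a generalized Nash equilibrium of SGCC is, by Theorem~\ref{thm:SufficentRiccati}, the same object as a GOLNE of DGC. Starting from the given solution $\{\bm{x}^{\star},\bm{\lambda},\bm{\mu}^{\star}\}^{1}$ of LCS1, Theorem~\ref{th:TPBlink} produces a \emph{consistent} solution $\{\bm{x}^{\star},\bm{e},\bm{\mu}^{\star};\bm{b}\}^{2}_{\bm{\mu}^{\star}}$ of LCS2, so $\bar{\bm{\mu}}=\bm{\mu}^{\star}$ and (Remark~\ref{rem:consistent}) $\beta_k^i=0$ for all $k\in\K_l$, $i\in\N$. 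Let $\bar{u}_k^{i}$ be the control read off via \eqref{eq:staticLCP1} and $\bar{\bu}$ the resulting joint profile; note $\bar{\bu}\in R(x_0)$ because the complementarity block \eqref{eq:ceq2PBCC} enforces stage-wise primal feasibility of \eqref{eq:inequalityconstraint}. I would then argue: (i) for each $i$, the pair $(\bar{\bu}^{i},\bar{\mu}^{i})=(\bar{\bu}^{i},\mu^{i\star})$ satisfies the KKT system \eqref{eq:staticLCP} of player~$i$'s program \eqref{eq:conopt} in SGCC --- this is precisely what LCS2 encodes after the state substitution, and for a consistent solution ($\beta_k^i=0$) it reduces to stationarity $\bar{u}_k^{i}=-\bar{y}_k^{i}$ together with $0\le M_k^i\bar{x}_k+[N_k^i]_i\bar{u}_k^{i}+\bar{\alpha}_k^{i}+r_k^i\perp\mu_k^{i\star}\ge0$; (ii) by Lemma~\ref{eq:rosenconvex} (using $Y_k^i\succ0$ and Theorem~\ref{thm:SufficentRiccati}), \eqref{eq:conopt} is a convex program with affine constraints, so the KKT conditions are \emph{sufficient} for global optimality; hence $\bar{\bu}^{i}$ solves \eqref{eq:conopt} and $(\bar{\bu}^{i},\bar{\bu}^{-i})$ is a generalized Nash equilibrium of SGCC. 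By Theorem~\ref{thm:SufficentRiccati} the objectives and feasible sets of SGCC and DGC coincide, so $(\bar{\bu}^{i},\bar{\bu}^{-i})$ satisfies Definition~\ref{def:OCNEdef}, i.e.\ it is a GOLNE of DGC; and by Corollary~\ref{cor:uequiv} it is exactly the profile $\{u_k^{i\star}\}$ given by \eqref{eq:OLNE}.

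In fact a shortcut is available that uses only Theorem~\ref{thm:SufficentRiccati}, the LCS1 conditions, and $Y_k^i\succ0$. Fix $i$ and keep $\bu^{-i}=\bu^{-i\star}$, so $\alpha_k^i=\sum_{j\in-i}[N_k^i]_ju_k^{j\star}$. In \eqref{eq:CostasV} the term $V_0^i$ does not depend on $\bu^i$ (the recursions \eqref{eq:RiccatiBackward} feed only on $\eta_k^i,\alpha_k^i$ and the fixed $\mu^{i\star}$), and the third term is $\mu^{i\star}$ paired with the left-hand side of the admissibility constraint \eqref{eq:inequalityconstraint}. Evaluating at $\bu^i=\bu^{i\star}$: the state trajectory is $x^\star$, the relation \eqref{eq:controlUeqv} from the proof of Theorem~\ref{th:TPBlink} gives $u_k^{i\star}+y_k^i=0$ so the quadratic term vanishes, and \eqref{eq:OpconstraintComp} forces ${\mu_k^{i\star}}^{\prime}\bigl(M_k^ix_k^\star+[N_k^i]_iu_k^{i\star}+\alpha_k^i+r_k^i\bigr)=0$; hence $J^i(\bu^{i\star},\bu^{-i\star})=V_0^i$. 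For any $\bu^i\in\U^i(\bu^{-i\star})$, $V_0^i$ is the same constant, $\tfrac{1}{2}\sum_{k\in\K_l}\normx{u_k^i+y_k^i}_{Y_k^i}\ge0$, and ${\mu_k^{i\star}}^{\prime}\bigl(M_k^ix_k+[N_k^i]_iu_k^i+\alpha_k^i+r_k^i\bigr)\ge0$ since $\mu_k^{i\star}\ge0$ and the bracket is nonnegative by admissibility; thus $J^i(\bu^i,\bu^{-i\star})\ge V_0^i=J^i(\bu^{i\star},\bu^{-i\star})$, which is \eqref{eq:OCNEdef}.

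The real content, common to both arguments, is the step from ``the candidate satisfies the necessary conditions'' to ``the candidate is an equilibrium,'' and the lever is convexity of $J^i$ in $\bu^i$ --- not visible in \eqref{eq:objective1}, and emerging only from the completed-squares form \eqref{eq:CostasV} once $Y_k^i\succ0$. The points to be careful about are bookkeeping ones: that $V_0^i$ genuinely does not involve $\bu^i$; that the $\mu^{i\star}$ appearing in \eqref{eq:CostasV} is the very LCS1 multiplier, so \eqref{eq:OpconstraintComp} applies verbatim; and, for the SGCC route, that consistency $\bar{\bm{\mu}}=\bm{\mu}^{\star}$ makes the effective SGCC multiplier $\bar{\mu}_k^{i}-\mu_k^{i\star}$ vanish, so that stationarity, primal and dual feasibility, and complementary slackness all hold simultaneously at $(\bar{\bu}^{i},\mu^{i\star})$.
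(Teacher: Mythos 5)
Your proposal is correct, and your ``shortcut'' argument is essentially the paper's own proof: evaluate \eqref{eq:CostasV} at the candidate (where \eqref{eq:controlUeqv} kills the quadratic term and the complementarity in \eqref{eq:OpconstraintComp} kills the multiplier term, giving $J^i(\bu^{i\star},\bu^{-i\star})=V_0^{i\star}$), and observe that for any admissible deviation both terms are nonnegative by $Y_k^i\succ0$, $\mu_k^{i\star}\ge0$, and primal feasibility. Your first route through the SGCC KKT system and convex-program sufficiency is only a mild repackaging of the same mechanism, matching the paper's use of Theorems \ref{th:TPBlink}, Corollary \ref{cor:uequiv}, and Lemma \ref{eq:rosenconvex}.
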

\begin{proof}  
	For any Player $i \in \N$, we fix the other players' strategies at $\bu^{-i\star}$. For any $\bu^i \in \U(\bu^{-i\star})$, let $\{x_k,~k \in \K\}$ be the generated state trajectory. From Theorem \ref{thm:SufficentRiccati}, Player $i$'s cost \eqref{eq:objective1} can be rewritten as:
	\begin{align}
		J^i(\bu^i, \bu^{-i\star}) &= V_0^{i\star} + \tfrac{1}{2} \sum_{k \in \K_l} \normx{u_k^i + y_k^i}_{Y_k^i} \nonumber \\
		&\quad + \sum_{k \in \K_l} {\mu_k^{i\star}}^\prime \big(M_k^i x_k + [N_k^i]_i u_k^i + {\alpha}_k^i + r_k^i\big), \label{eq:OLNash1}
	\end{align}
	where $\alpha_k^i = \sum_{j \in i^-} [N_k^i]_j u_k^{j\star}$. Furthermore, $V_0^{i\star}$ is obtained using \eqref{eq:RiccatiBackward} with $\eta_k^i = \sum_{j \in i^-} B_k^j u_k^{j\star}$ and is independent of Player $i$'s strategy $\bu^i$. From Theorem \ref{th:TPBlink}, a solution $\{\bm{x}^{\star}, \bm{\lambda}, \bm{\mu}^{\star}\}^1$ of LCS1 provides a consistent solution $\{\bm{x}^{\star}, \bm{e}^{\star}, \bm{\mu}^{\star}; \bm{b}^{\star}\}^2_{\bm{\mu}^{\star}}$ for LCS2. Lemma \ref{eq:rosenconvex} confirms that a GOLNE exists for SGCC. We next show that the controls synthesized from $\{\bm{x}^{\star}, \bm{e}^{\star}, \bm{\mu}^{\star}; \bm{b}^{\star}\}^2_{\bm{\mu}^{\star}}$ (using \eqref{eq:staticLCP1} and \eqref{eq:ybeq}) are a GOLNE for SGCC, and thus for DGC. Setting $\bu^i = \bu^{i\star}$ in \eqref{eq:OLNash1} with the corresponding state trajectory $\{x_k^\star, ~k \in \K\}$, and using Corollary \ref{cor:uequiv}, we have $\bar{u}_k^j = u_k^{j\star}$ for all $j \in \N$. Then, from \eqref{eq:CorUeqv} and the definition of $y_k^i$ (see Theorem 1), the second term on the right-hand side of \eqref{eq:OLNash1} vanishes. Further, the third term also  vanishes due to the consistency of $\{\bm{x}^{\star}, \bm{e}^{\star}, \bm{\mu}^{\star}; \bm{b}^{\star}\}^2_{\bm{\mu}^{\star}}$ in \eqref{eq:ceq2PBCC}; see Remark \ref{rem:consistent}. Thus, we have:
	\begin{align}
		J^i(\bu^{i\star}, \bu^{-i\star}) = V_0^{i\star}. \label{eq:OLNash2}
	\end{align}
	Next, we compare the costs in \eqref{eq:OLNash1} and \eqref{eq:OLNash2} for all $\bu^i \in \U(\bu^{-i\star})$. Since $\mu_k^{i\star} \geq 0$ and $M_k^i x_k + [N_k^i]_i u_k^i + \bar{\alpha}_k^i + r_k^i \geq 0$ for all $\bu^i \in \U(\bu^{-i\star})$, and due to the positive definiteness of $\{Y_k^i,~k \in \K_l\}$, the second and third terms on the right-hand side of \eqref{eq:OLNash1} are non-negative, implying:
	\begin{align*}
		J^i(\bu^{i\star}, \bu^{-i\star}) \leq J^i(\bu^i, \bu^{-i\star}),~\forall \bu^i \in \U(\bu^{-i\star}).
	\end{align*}
	As the choice of Player $i$ is arbitrary, this condition holds for each player $i \in \N$. Therefore, the strategy profile $\{u_k^{i\star},~k \in \K_l,~i \in \N\}$ is a GOLNE for SGCC. Moreover, from Definition \ref{def:OCNEdef}, it is also a GOLNE for DGC.	 
\end{proof}
%%%
\begin{figure}[h]
	\centering 
	\begin{tikzpicture}[font=\small,  scale=0.4, 
	squarenode/.style={rectangle, draw=none, fill=YellowOrange!10,  inner sep=3.5pt},
    squarenode1/.style={rectangle, draw=none, fill=white,  inner sep=3pt},]
	\def\ysh{35}
	\def\xsh{60}
	\node[squarenode](DGC)[label={[Blue]}]{DGC};
    \node[squarenode](LCS1)[below of=DGC,xshift=0*\xsh,yshift=-.01*\ysh,label={[Blue]}][label={[Blue]}]{LCS1};
    \node[squarenode](EG)[right of=DGC,xshift=1.25*\xsh,yshift=0,label={[Blue]}]{SGCC};
    \node[squarenode](LCS2)[below of=EG,xshift=0*\xsh,yshift=-.01*\ysh,label={[Blue]}][label={[Blue]}]{LCS2};
    \node[squarenode1](EQ1)[below of=LCS1, yshift=0.35*\ysh]{{$\{\boldsymbol{x}^{\dagger}, \boldsymbol{\lambda}, \boldsymbol{\mu}^{\dagger} \}^{1}$}};
    \node[squarenode1](EQ2)[below of=LCS2, yshift=0.35*\ysh]{{$\{\boldsymbol{x}^{\dagger}, \boldsymbol{e}, \boldsymbol{\mu}^{\dagger};\boldsymbol{b} \}^{2}_{\boldsymbol{\mu}^{\dagger}}$}};
    
    \node[squarenode1](SC)[above of=EQ2, xshift=1.35*\xsh,yshift=0.015*\ysh]{{$\{Y_k^i\succ 0,~k\in\K_l,~i\in\N\}$}};
    %\node[squarenode1](SC)[above of=EQ2, xshift=1.25*\xsh,yshift=-1*\ysh]{{are positive definite}};
    %
    \draw[thick, -Stealth] (DGC) -- (EG) node[above, pos=0.5] {Theorem  \ref{thm:SufficentRiccati}};
    \draw[thick, -Stealth] (DGC) -- (LCS1) node[left, pos=0.5]{N.C.};
    \draw[thick, -Stealth] (EG) -- (LCS2) node[right, pos=0.5]{N.C.};
    \draw[thick, -Stealth] ($(EQ1.east)+(0,.2)$) -- ($(EQ2.west)+(-0.1,.2)$) node[above, pos=0.5]{Theorem \ref{th:TPBlink}};
    \draw[thick, Stealth-] ($(EQ1.east)+(0.1,-.2)$) -- ($(EQ2.west)+(0,-.2)$) node[below, pos=0.5]{Theorem  \ref{th:TPBlink2}};
    \draw[thick, Stealth-] (EQ2) -| node {} (SC)
    node[above, pos=0.25]{Theorem \ref{thm:Nashexistence}}
    node[below, pos=0.25]{S.C.};
%    \draw[thick, -Stealth] (SC) -- (EQ2) node[right, pos=0.4]{Thm.  \ref{thm:Nashexistence}} 
%    node[left, pos=0.4]{S.C.};
\end{tikzpicture} 
	\caption{N.C. (S.C.) refer to necessary (sufficient) conditions. Replace $\dagger$ with $\star$ for Theorem \ref{th:TPBlink} and $\dagger$ with $-$ for Theorem \ref{th:TPBlink2}.}
	\label{fig:dependency}
\end{figure}
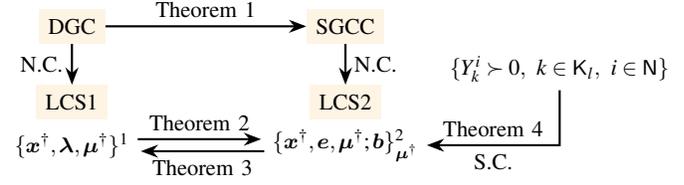
%%% 
Fig. \ref{fig:dependency} illustrates the dependency between the results obtained so far. The following theorem summarizes both the necessary and sufficient conditions for the existence of a GOLNE. 
\begin{theorem}\label{th:SuffnadUniquness}
	Let Assumption \ref{ass:GameAssumption}  and \ref{ass:Yassumption} hold. Let the matrices $\{Y_k^i,~k\in \K_l,~i\in\N\}$ be positive definite. Then,
\begin{enumerate}[label=(\roman*)]
		\item A GOLNE exists if and only if LCS1 \eqref{eq:LQKKT} has a solution.
		\item GOLNE is unique if and only if LCS1 \eqref{eq:LQKKT} is uniquely solvable.
		\item $\{u_k^{i\star}, k\in\K_l, i\in \N\}$ given by \eqref{eq:OLNE} using the solution of LCS1 \eqref{eq:LQKKT} is a GOLNE. Further, 
     the GOLNE cost of Player $i$ ($i \in\N$) is given by
		\begin{align}
			&J^i (\bu^{i\star}, \bu^{-i\star}) =\tfrac{1}{2}\normx{x_0}_{E_0^i}+{e_0^{i}}^\prime x_0+f_0^i.\label{eq:NashCost}
		\end{align} 
		\end{enumerate}
\end{theorem}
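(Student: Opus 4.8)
The plan is to assemble Theorem \ref{th:SuffnadUniquness} as a near-immediate corollary of the machinery already built, with the only genuine work being the cost formula \eqref{eq:NashCost}. For part (i), the ``if'' direction is exactly Theorem \ref{thm:Nashexistence}: under Assumptions \ref{ass:GameAssumption}, \ref{ass:Yassumption} and positive-definiteness of the $Y_k^i$, any solution of LCS1 yields a GOLNE via \eqref{eq:OLNE}. For the ``only if'' direction I would invoke the necessity established in subsection \ref{sec:Necessary}: if $\bu^\star$ is a GOLNE, then each player's problem \eqref{eq:dyncon} is a convex program (its cost is strictly convex in $\bu^i$ by positive-definiteness of $Y_k^i$, equivalently the convexity argument of Lemma \ref{eq:rosenconvex}), so the discrete Pontryagin conditions \eqref{eq:LQKKT} are not just necessary but characterize the optimum; hence the associated multipliers and co-states assemble into a solution of LCS1. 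Thus existence of a GOLNE is equivalent to solvability of LCS1.

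For part (ii), I would combine the two one-to-one correspondences already proved. By Theorems \ref{th:TPBlink} and \ref{th:TPBlink2} together with Corollary \ref{cor:uequiv}, solutions of LCS1 are in bijection with consistent solutions of LCS2, and the synthesized control profiles coincide; moreover by Lemma \ref{eq:rosenconvex} and Theorem \ref{thm:Nashexistence} these control profiles are precisely the GOLNE of SGCC, which (by Definition \ref{def:OCNEdef} and Theorem \ref{thm:Nashexistence}) are precisely the GOLNE of DGC. Chasing these identifications: distinct GOLNE strategy profiles must arise from distinct solutions of LCS1 (since the map solution $\mapsto$ control via \eqref{eq:OLNE} composed with Theorem \ref{th:TPBlink2}'s inverse is injective on the relevant data), and conversely each GOLNE gives rise to a solution of LCS1; so the GOLNE is unique iff LCS1 is uniquely solvable. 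I would phrase this carefully to avoid the subtlety that LCS1 carries extra data ($\bm\lambda$), but since $\bm\lambda$ is determined by $\bm x^\star$ and $\bm\mu^\star$ through \eqref{eq:OPlambda} with its boundary condition, uniqueness of the solution of LCS1 and uniqueness of the equilibrium strategy profile match up.

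For part (iii), the first sentence is just a restatement of Theorem \ref{thm:Nashexistence}. The cost formula \eqref{eq:NashCost} is where the actual computation lives: from \eqref{eq:OLNash2} in the proof of Theorem \ref{thm:Nashexistence} we already have $J^i(\bu^{i\star},\bu^{-i\star}) = V_0^{i\star}$, and by the definition \eqref{eq:Vfnc} of $V_k^i$ we have $V_0^i = \tfrac{1}{2}\normx{x_0}_{E_0^i} + {e_0^i}^\prime x_0 + f_0^i$, evaluated along the equilibrium state trajectory and with $\bm e$, $\bm f$ generated by \eqref{eq:RiccatiBackward} using $\eta_k^i = \sum_{j\in -i}B_k^j u_k^{j\star}$ and $\mu_k^{i\star}$ from the LCS1 solution. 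So I would simply note that substituting $k=0$ into \eqref{eq:Vfnc} and combining with \eqref{eq:OLNash2} yields \eqref{eq:NashCost}, with $E_0^i$ from the Riccati recursion \eqref{eq:EEQ} and $e_0^i$, $f_0^i$ from \eqref{eq:eEQ}, \eqref{eq:fEQ}.

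The main obstacle is not any hard estimate but rather bookkeeping precision in parts (i)--(ii): I must make sure the ``only if'' direction genuinely uses convexity (so that the KKT/Pontryagin conditions are sufficient for each player's subproblem and hence a true GOLNE produces a bona fide LCS1 solution rather than merely a stationary point), and I must state the uniqueness equivalence at the level of equilibrium strategy profiles versus LCS1 solution tuples in a way that the auxiliary co-state and multiplier variables do not spuriously break the bijection. Once those correspondences are stated cleanly, everything else is a direct appeal to Theorems \ref{thm:SufficentRiccati}--\ref{thm:Nashexistence}, Corollary \ref{cor:uequiv}, and Lemma \ref{eq:rosenconvex}.
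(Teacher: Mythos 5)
Your proposal is correct and follows essentially the same route as the paper: part (i) is necessity of \eqref{eq:LQKKT} plus Theorem \ref{thm:Nashexistence}, part (ii) counts GOLNE through LCS1 solutions (the paper cites Theorem \ref{thm:Nashexistence} and Remark \ref{rem:NashNecessay} directly, without your detour through the LCS1--LCS2 bijection of Theorems \ref{th:TPBlink}--\ref{th:TPBlink2}, which is not needed for this part), and part (iii) is exactly \eqref{eq:OLNash2} combined with \eqref{eq:Vfnc} at $k=0$. The convexity argument you invoke for the ``only if'' of (i) is likewise superfluous: necessity of the discrete Pontryagin conditions under the constraint qualification of Assumption \ref{ass:GameAssumption}.\ref{item:2} is all the paper uses there.
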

\begin{proof} 
\begin{enumerate}[label=(\roman*)]
	\item From Assumption \ref{ass:GameAssumption}, if a GOLNE exists, then LCS1, being a necessary condition, must be solvable. Conversely, under Assumption \ref{ass:Yassumption} and positive definiteness of the matrices $\{Y_k^i,~k \in \K_l,~i \in \N\}$, when LCS1 has a solution, then from Theorem \ref{thm:Nashexistence},   $\{u_k^{i\star}, k \in \K_l, i \in \N\}$, given by \eqref{eq:OLNE}, is a GOLNE.
	\item From Theorem \ref{thm:Nashexistence}, each solution of LCS1 provides a GOLNE. Thus, if the GOLNE is unique, then LCS1 cannot have more than one solution. Conversely, when LCS1 is uniquely solvable, then due to  Assumption \ref{ass:GameAssumption}.\ref{item:3} (positive definiteness of $R_k^{ii}$), and from Remark \ref{rem:NashNecessay}, there cannot exist more than one GOLNE.
	\item From Theorem \ref{thm:Nashexistence}, the GOLNE is given by \eqref{eq:OLNE} using the solution of LCS1 \eqref{eq:LQKKT}. The GOLNE cost of Player $i$ follows from \eqref{eq:OLNash2}.
\end{enumerate}
\end{proof}
\begin{remark}\label{rem:invertibility} 
%	Theorem \ref{th:SuffnadUniquness} requires only that the matrices \(\{Y_k^i,~k\in \K_l,~i\in \N\}\) are positive definite, which can be verified a priori using just the problem data related to state dynamics and objective functions, without solving the game. No assumption of positive semi-definiteness is made on $\{Q_k^i,~k \in \K,~i \in \N\}$. Under Assumption \ref{ass:GameAssumption}.\ref{item:3}, positive semi-definiteness of $\{Q_k^i,~k \in \K,~i \in \N\}$ is sufficient for the positive definiteness of $\{Y_k^i,~k \in \K_l,~i \in \N\}$. 
%	
	Theorem \ref{th:SuffnadUniquness} requires only that the matrices $\{Y_k^i,~ k \in \K_l,~i \in \N\}$ be positive definite, which can be verified a priori using problem data given by \eqref{eq:DGC}, without solving the game. No assumption of positive semi-definiteness is made on $\{Q_k^i,~ k \in \K,~i \in \N\}$. Under Assumption \ref{ass:GameAssumption}.\ref{item:3}, the semi-definiteness of $\{Q_k^i,~ k \in \K,~i \in \N\}$ is sufficient for the positive definiteness of $\{Y_k^i,~k \in \K_l,~i \in \N\}$.		
\end{remark}
\subsection{Reformulation of LCS1 as a linear complementarity 	problem}\label{sec:Solvability}  
In this section, under additional assumptions, we reformulate LCS1 \eqref{eq:LQKKT} as a large-scale linear complementarity problem (LCP) using an approach similar to \cite{Reddy:15}. %Due to page limitations, we provide only a brief overview here; see the online supplementary version \cite{ParthaOL:2023} for details. 
This approach eliminates the state and co-state variables of LCS1, allowing \eqref{eq:LQKKT} to be expressed explicitly in terms of the multipliers ($\bm{\mu}^{\star}$). We now state the following assumption.
\begin{subequations}
		\begin{assumption}\label{ass:LambdaaffineX} The solution $\{P_k^i,~k\in \K \}$ of the following symmetric matrix Riccati difference   equation
			\begin{align} 
				P^i_{k} &= Q_k^i+A^\prime_k P^i_{k+1}\Lambda_{k}^{-1}A_k,~\Lambda_k:=\mathbf{I}+\sum_{j\in \N} B_k^j(R_k^{jj} )^{-1}{B_k^j}^\prime P_{k+1}^j, \label{eq:Pbackward}
			\end{align}
			with $P_K^i=Q_K^i$ exists for all $i\in \N$ (and, thus the matrices $\{\Lambda_k,~k\in \K_l\}$ are invertible). 
		\end{assumption}
		The next proposition follows from \cite[Remark 1.1]{Abou:03}.
		\begin{proposition}	Let Assumptions \ref{ass:GameAssumption}  and \ref{ass:LambdaaffineX} hold. The boundary value problem defined by \eqref{eq:OPstate}-\eqref{eq:OPlambda} has a unique solution $\lambda_k^i:=P_k^ix_k^\star+\zeta_k^i$, where $P_k^i$ satisfies \eqref{eq:Pbackward} and $\zeta_k^i$ satisfies the following backward linear recursive equation for $k\in \K_l$
			\begin{align}
				\zeta_k^i&=A^\prime_k\zeta^i_{k+1}-{M^{i}_k}^\prime\mu_{k}^{i\star}-A^\prime_k P^i_{k+1} \Lambda_{k}^{-1}\nonumber\\
				&\quad\times\sum_{j \in \N}B^j_k (R^{jj}_k)^{-1}({B^j_k}^{\prime}\zeta^j_{k+1}-[N^{j}_k]_j^{\prime}\mu_{k}^{j\star})+p_k^{i},~\zeta_K^i=p_K^i.  \label{eq:Zetabackward}
			\end{align}
		\end{proposition}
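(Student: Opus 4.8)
The plan is to use the classical Riccati substitution (the decoupling or ``sweep'' method) that underlies the unconstrained result \cite[Remark 1.1]{Abou:03}, now treating the multipliers $\{\mu_k^{j\star}\}$ as a fixed exogenous affine forcing. First I would posit the ansatz $\lambda_k^i = P_k^i x_k^\star + \zeta_k^i$ for each $i \in \N$ and each $k \in \K$, with terminal data $P_K^i = Q_K^i$ and $\zeta_K^i = p_K^i$; this is consistent with the terminal condition $\lambda_K^i = Q_K^i x_K^\star + p_K^i$ attached to \eqref{eq:OPlambda}. The goal is then to show that the two required recursions are forced by the boundary value problem, that they are solvable under Assumption \ref{ass:LambdaaffineX}, and that the resulting $(x^\star,\lambda)$ is the unique solution of \eqref{eq:OPstate}--\eqref{eq:OPlambda}.

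Next I would substitute the ansatz into the forward equation \eqref{eq:OPstate}: replacing $\lambda_{k+1}^j$ by $P_{k+1}^j x_{k+1}^\star + \zeta_{k+1}^j$ and collecting the terms containing $x_{k+1}^\star$ on the left yields $\Lambda_k\, x_{k+1}^\star = A_k x_k^\star - \sum_{j \in \N} B_k^j (R_k^{jj})^{-1}\big({B_k^j}^\prime\zeta_{k+1}^j - [N_k^j]_j^\prime \mu_k^{j\star}\big)$, with $\Lambda_k$ exactly the matrix appearing in \eqref{eq:Pbackward}. Under Assumption \ref{ass:LambdaaffineX}, $\Lambda_k$ is invertible, so this determines $x_{k+1}^\star$ explicitly once the $\zeta$-sequence is known. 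Inserting this expression for $x_{k+1}^\star$ into the backward equation \eqref{eq:OPlambda} written through the ansatz, i.e.\ into $P_k^i x_k^\star + \zeta_k^i = Q_k^i x_k^\star + p_k^i + A_k^\prime\big(P_{k+1}^i x_{k+1}^\star + \zeta_{k+1}^i\big) - {M_k^i}^\prime\mu_k^{i\star}$, and matching the coefficient of $x_k^\star$ and the remaining state-independent terms separately, reproduces precisely \eqref{eq:Pbackward} for $P_k^i$ and \eqref{eq:Zetabackward} for $\zeta_k^i$.

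For existence, Assumption \ref{ass:LambdaaffineX} supplies a solution $\{P_k^i\}$ of \eqref{eq:Pbackward} with every $\Lambda_k$ invertible; the linear backward recursion \eqref{eq:Zetabackward} with $\zeta_K^i = p_K^i$ then has a unique solution $\{\zeta_k^i\}$, after which the forward recursion $x_{k+1}^\star = \Lambda_k^{-1}(\cdots)$ with $x_0^\star = x_0$ produces a unique state trajectory, and $\lambda_k^i := P_k^i x_k^\star + \zeta_k^i$ closes the loop and satisfies \eqref{eq:OPstate}--\eqref{eq:OPlambda} by construction. For uniqueness, I would take an arbitrary solution $(x^\star,\lambda)$ of the two-point boundary value problem, set $\widehat{\zeta}_k^i := \lambda_k^i - P_k^i x_k^\star$, and show by backward induction (using \eqref{eq:OPstate}, \eqref{eq:OPlambda} and \eqref{eq:Pbackward}) that $\{\widehat{\zeta}_k^i\}$ obeys the same recursion \eqref{eq:Zetabackward} with the same terminal value $p_K^i$; hence $\widehat{\zeta}_k^i = \zeta_k^i$, the forward dynamics for $x^\star$ is then the one displayed above, and the solution coincides with the constructed one.

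The step I expect to be the main obstacle is the uniqueness argument, since a forward--backward difference system with split boundary data need not have a unique solution in general; the decoupling goes through only because Assumption \ref{ass:LambdaaffineX} guarantees invertibility of every $\Lambda_k$, which is simultaneously what makes the Riccati substitution valid and what makes the auxiliary $\zeta$-recursion well posed. The multipliers are otherwise harmless here: they enter \eqref{eq:OPstate}--\eqref{eq:OPlambda} only through the affine terms $[N_k^j]_j^\prime \mu_k^{j\star}$ and ${M_k^i}^\prime\mu_k^{i\star}$, so the coefficient-matching leaves \eqref{eq:Pbackward} exactly as in the unconstrained case \cite{Abou:03} and merely augments the forcing term of the $\zeta$-equation.
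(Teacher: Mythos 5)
Your proposal is correct and follows essentially the same route as the paper, which simply defers this Proposition to the unconstrained decoupling result in \cite[Remark 1.1]{Abou:03}: your Riccati ``sweep'' substitution with the multipliers treated as fixed affine forcing is exactly the argument behind that citation, and your uniqueness step (defining $\widehat{\zeta}_k^i=\lambda_k^i-P_k^ix_k^\star$ and showing by backward induction that it satisfies \eqref{eq:Zetabackward}) correctly repairs the slight looseness of ``coefficient matching'' along a single trajectory. In effect you have supplied the details the paper leaves to the reference, with no gap.
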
 
\end{subequations}
Next, under Assumption \ref{ass:LambdaaffineX} and from \eqref{eq:OLNE}, we obtain
\begin{align}
	u_k^{i\star}=-(R^{ii}_k)^{-1}\big({B^{i}_k}^\prime (P^i_{k+1}x_{k+1}^{\star} +\zeta^i_{k+1})-[N^{i}_k]_i^\prime\mu_{k}^{i\star}\big).\label{eq:OPcontrol2}
\end{align}
Upon substituting \eqref{eq:OPcontrol2} in \eqref{eq:OPstate}, we get
	\begin{align}
		x_{k+1}^{\star}= \Lambda_k^{-1}\big(A_k x_k^{\star}-\sum_{j\in\N}{B}^{j}_k (R_k^{jj})^{-1}({B^{j}_k}^\prime\zeta^j_{k+1}-[N^{j}_k]_j^\prime \mu_{k}^{j\star} )\big).\label{eq:stateL1}
	\end{align}
Using the notations defined in the Appendix, and with Assumption \ref{ass:LambdaaffineX}, the joint GOLNE strategy profile \eqref{eq:OPcontrol2} and the vector representation of LCS1 \eqref{eq:LQKKT} are given by
\begin{subequations}\label{eq:LCS5}
	\begin{align}
		&\bu_k^{\star}=\mathbf{F}_{k}x_{k}^{\star}+\overline{\mathbf{F}}_{k+1}\bm{\zeta}_{k+1}+\tilde{\mathbf{F}}_{k}\bm{\upmu}_{k}^{\star},\label{eq:OLNE3}\\
		&x_{k+1}^{\star} =\mathbf{G}_k x_{k}^{\star}+\overline{\mathbf{G}}_{k+1}\bm{\zeta}_{k+1}+\tilde{\mathbf{G}}_k\bm{\upmu}_{k}^{\star},~ x_0^{\star}=x_0,\label{eq:Forwardeq2}\\
		&\bm{\zeta}_k=\mathbf{H}_{k+1}\bm{\zeta}_{k+1}+\overline{\mathbf{H}}_{k}\bm{\upmu}_{k}^{\star}+\mathbf{p}_{k},~\bm{\zeta}_{K} = \mathbf{p}_K,\label{eq:Backwardeq2}\\
		&0 \leq (\overline{\mathbf{M}}_k+\overline{\mathbf{N}}_k\mathbf{F}_{k}) x_k^{\star}+\overline{\mathbf{N}}_k\overline{\mathbf{F}}_{k+1}\bm{\zeta}_{k+1}+\overline{\mathbf{N}}_k\tilde{\mathbf{F}}_{k}\bm{\upmu}_{k}^{\star}+\mathbf{r}_k \perp \bm{\upmu}_{k}^{\star} \geq 0.\label{eq:ComplementCond2}
	\end{align}
\end{subequations}
The linear forward and backward difference equations \eqref{eq:Forwardeq2}-\eqref{eq:Backwardeq2}, can be solved as follows:
	\begin{subequations}\label{eq:LCS61}
		\begin{align}
			x_{k}^{\star} &=\phi(k, 0)x_{0}+\sum_{\tau=1}^{k}\phi(k, \tau)\big(\overline{\mathbf{G}}_{\tau}\bm{\zeta}_{\tau}+ \tilde{\mathbf{G}}_{\tau-1}\bm{\upmu}_{\tau-1}^{\star}\big), \label{eq:StateasZandMU1}\\
			\bm{\zeta}_{k}&=\sum_{\tau=k}^{K-1}\varphi(k, \tau)\overline{\mathbf{H}}_{\tau}\bm{\upmu}_{\tau}^{\star}+\sum_{\tau=k}^{K}\varphi(k, \tau)\mathbf{p}_{\tau},\label{eq:ZetaVectorasMU1}
		\end{align}
	\end{subequations}
		where $\phi(k, \tau)$ and $\varphi(k, \tau)$ are the state transition matrices associated with the forward and backward difference equations  \eqref{eq:Forwardeq2} and \eqref{eq:Backwardeq2}, respectively and are given as
		\[\phi(k, \tau)= 
		\begin{cases}
			\mathbf{G}_{k-1}\mathbf{G}_{k-2}\cdots\mathbf{G}_{\tau}, &  \;\textrm{for}\,\, k<\tau\\
			\mathbf{I},&  \; \textrm{for}\,\, k=\tau \\
		\end{cases} ~~\textrm{with}~~ k, \tau \in \K,
		\]
		\[\varphi(k, \tau)= 
		\begin{cases}
			\mathbf{H}_{k+1}\mathbf{H}_{k+2}\cdots\mathbf{H}_{\tau}, &  \;\textrm{for}\,\, k<\tau\\
			\mathbf{I},&  \; \textrm{for}\,\, k=\tau \\
		\end{cases}  ~~\textrm{with}~~ k, \tau \in \K_l.
		\]
		Now using \eqref{eq:ZetaVectorasMU1}, the equation \eqref{eq:StateasZandMU1} can be written as
		\begin{align}
			x_{k}^{\star} &=\phi(k, 0)x_{0}+\sum_{\tau =1}^{K}\Big(\sum_{\rho=1}^{\mathrm{min}\,\,(k,\tau)}\phi(k, \rho)\overline{\mathbf{G}}_{\rho}\varphi(\rho, \tau)\Big)\mathbf{p}_{\tau}\nonumber\\
			&\quad+\sum_{\tau =1}^{K-1}\Big(\sum_{\rho=1}^{\mathrm{min}\,\,(k,\tau)}\phi(k, \rho)\overline{\mathbf{G}}_{\rho}\varphi(\rho, \tau)\Big)\overline{\mathbf{H}}_{\tau}\bm{\upmu}_{\tau}^{\star}\nonumber\\
			&\quad+\sum_{\tau=1}^{k}\phi(k, \tau)\tilde{\mathbf{G}}_{\tau-1}\bm{\upmu}_{\tau-1}^{\star}.\label{eq:StateasZandMU11}
	\end{align}
Next, using \eqref{eq:ZetaVectorasMU1} in \eqref{eq:StateasZandMU1}, aggregating the variables for all \( k \in \K_l \), and with the notations defined in the Appendix, equations \eqref{eq:OLNE3}, \eqref{eq:LCS61}, and \eqref{eq:ComplementCond2} are compactly written as:
%Next, aggregating the variables for all $k\in\K_l$ and using the notations defined in the Appendix, equations \eqref{eq:LCS5} are compactly written as 
\begin{subequations}\label{eq:OLNEaggrEq}
	\begin{align}
		&\bu_\K^{\star}=\mathbf{F}_\K x_\K^{\star}+\overline{\mathbf{F}}_\K\bm{\zeta}_{\K}+\tilde{\mathbf{F}}_\K\bm{\upmu}_{\K}^{\star},\label{eq:aggregateControl}\\
		& x_{\K}^{\star}=\bm{\Phi}_0x_0+\bm{\Phi}_1\mathbf{p}_{\K}+\bm{\Phi}_2\bm{\upmu}_{\K}^{\star},\label{eq:aggregateState}\\
		&   \bm{\zeta}_{\K}=\bm{\Psi}_1\mathbf{p}_{\K}+\bm{\Psi}_2\bm{\upmu}_{\K}^{\star},\label{eq:aggregateCoState}\\
%	\end{align}
%	\begin{align}
		&   0 \leq \mathsf{M}_\K x_\K^{\star}+\mathsf{N}_\K\bm{\zeta}_\K+\tilde{\mathsf{N}}_\K\bm{\upmu}_\K^{\star}+\mathbf{r}_\K \perp \bm{\upmu}_\K^{\star} \geq 0.\label{eq:VectorLCP}
	\end{align}
\end{subequations}

The following theorem provides a reformulation of LCS1 \eqref{eq:LQKKT}, which characterizes GOLNE, as a large-scale LCP.
\begin{theorem}\label{th:OLNElcp}
	Let   Assumptions \ref{ass:GameAssumption}, \ref{ass:Yassumption} and \ref{ass:LambdaaffineX} hold true. In addition to this let the matrices $\{Y_k^i=R^{ii}_k+{B_k^{i}}^\prime E_{k+1}^iB_k^i,~k\in\K_l,~i\in\N\}$ be positive definite. Then, the joint GOLNE strategy profile is given by 
	\begin{align}
		\bu_\K^{\star}=\mathsf{F}\bm{\upmu}_{\K}^{\star}+\mathsf{P}(x_0). \label{eq:FinalOLNE}
	\end{align}
	Here, $\bm{\upmu}_{\K}^{\star}$ represents a solution of the following  large-scale linear complementarity problem  
	\begin{align}
		\mathrm{LCP}(x_0):\quad 	0 \leq \mathsf{M}\bm{\upmu}_{\K}^{\star}+\mathsf{q}(x_0) \perp \bm{\upmu}_{\K}^{\star} \geq 0, \label{eq:FinalOLNElcp}
	\end{align}
	where  
$\mathsf{M}:=\mathsf{M}_\K\bm{\Phi}_2+\mathsf{N}_\K\bm{\Psi}_2+\tilde{\mathsf{N}}_\K$, $\mathsf{q}(x_0):=\mathsf{M}_\K \bm{\Phi}_0x_0+(\mathsf{M}_\K\bm{\Phi}_1+\mathsf{N}_\K\bm{\Psi}_1)\mathbf{p}_{\K}+\mathbf{r}_\K$, $\mathsf{F}:=\mathbf{F}_\K\bm{\Phi}_2+\overline{\mathbf{F}}_\K\bm{\Psi}_2+\tilde{\mathbf{F}}_\K$,
	$\mathsf{P}(x_0):=\mathbf{F}_\K \bm{\Phi}_0x_0+(\mathbf{F}_\K\bm{\Phi}_1+\overline{\mathbf{F}}_\K\bm{\Psi}_1)\mathbf{p}_{\K}$.
\end{theorem}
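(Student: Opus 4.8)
The proof is a pure elimination argument: starting from LCS1 \eqref{eq:LQKKT}, I would successively remove the co-state $\bm{\lambda}$, then the auxiliary co-state $\bm{\zeta}$, then the state $\bm{x}^\star$, until only the multiplier vector $\bm{\upmu}_\K^\star$ remains, and read off the promised affine formulas for $\bu_\K^\star$ and for $\mathrm{LCP}(x_0)$.

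First I would invoke the preceding proposition (which holds under Assumptions \ref{ass:GameAssumption} and \ref{ass:LambdaaffineX}) to write $\lambda_k^i = P_k^i x_k^\star + \zeta_k^i$, so that the two-point boundary value problem \eqref{eq:OPstate}--\eqref{eq:OPlambda} decouples, for each fixed $\bm{\upmu}^\star$, into the forward recursion \eqref{eq:stateL1} for $x_k^\star$ and the backward recursion \eqref{eq:Zetabackward} for $\zeta_k^i$; the control \eqref{eq:OLNE} then takes the form \eqref{eq:OPcontrol2}. Collecting these per-player relations into the block notation of the Appendix produces the vectorized system \eqref{eq:LCS5}: \eqref{eq:OLNE3} makes $\bu_k^\star$ affine in $(x_k^\star,\bm{\zeta}_{k+1},\bm{\upmu}_k^\star)$, while \eqref{eq:Forwardeq2} and \eqref{eq:Backwardeq2} are genuinely decoupled linear forward/backward difference equations — well posed because $\Lambda_k$ is invertible by Assumption \ref{ass:LambdaaffineX}.

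Second, I would solve these two difference equations in closed form through the transition matrices $\phi(k,\tau)$ and $\varphi(k,\tau)$, giving \eqref{eq:LCS61}; substituting the expression \eqref{eq:ZetaVectorasMU1} for $\bm{\zeta}_k$ into \eqref{eq:StateasZandMU1} yields \eqref{eq:StateasZandMU11}, which exhibits $x_k^\star$ as an affine function of $(x_0,\mathbf{p}_\K,\bm{\upmu}_\K^\star)$. Aggregating all stage quantities over $k\in\K_l$ and bundling the nested sums of \eqref{eq:StateasZandMU11} into the block matrices $\bm{\Phi}_0,\bm{\Phi}_1,\bm{\Phi}_2$ and $\bm{\Psi}_1,\bm{\Psi}_2$ from the Appendix produces \eqref{eq:OLNEaggrEq}; in particular $x_\K^\star$ (via \eqref{eq:aggregateState}), $\bm{\zeta}_\K$ (via \eqref{eq:aggregateCoState}), and $\bu_\K^\star$ (via \eqref{eq:aggregateControl}) are all affine in $\bm{\upmu}_\K^\star$, the $x_0$- and $\mathbf{p}_\K$-dependent parts being constants. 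Substituting \eqref{eq:aggregateState} and \eqref{eq:aggregateCoState} into the aggregated complementarity condition \eqref{eq:VectorLCP} and grouping the coefficient of $\bm{\upmu}_\K^\star$ gives $\mathsf{M}=\mathsf{M}_\K\bm{\Phi}_2+\mathsf{N}_\K\bm{\Psi}_2+\tilde{\mathsf{N}}_\K$, while the remaining terms collapse to $\mathsf{q}(x_0)=\mathsf{M}_\K\bm{\Phi}_0 x_0+(\mathsf{M}_\K\bm{\Phi}_1+\mathsf{N}_\K\bm{\Psi}_1)\mathbf{p}_\K+\mathbf{r}_\K$, which is exactly \eqref{eq:FinalOLNElcp}; the identical substitution into \eqref{eq:aggregateControl} yields \eqref{eq:FinalOLNE} with $\mathsf{F}=\mathbf{F}_\K\bm{\Phi}_2+\overline{\mathbf{F}}_\K\bm{\Psi}_2+\tilde{\mathbf{F}}_\K$ and $\mathsf{P}(x_0)=\mathbf{F}_\K\bm{\Phi}_0 x_0+(\mathbf{F}_\K\bm{\Phi}_1+\overline{\mathbf{F}}_\K\bm{\Psi}_1)\mathbf{p}_\K$. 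Since for each fixed $\bm{\upmu}^\star$ the representation $\lambda=Px+\zeta$ is the \emph{unique} solution of the boundary value problem, this chain of substitutions is reversible, so solutions of LCS1 correspond bijectively to solutions $\bm{\upmu}_\K^\star$ of $\mathrm{LCP}(x_0)$; combined with Theorem \ref{th:SuffnadUniquness} (valid because $\{Y_k^i\}$ are positive definite), every such $\bm{\upmu}_\K^\star$ yields a GOLNE recovered through \eqref{eq:OLNE}, equivalently \eqref{eq:FinalOLNE}.

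The only delicate point is the index bookkeeping in the aggregation step: one must keep the shifts between $k$ and $k+1$ and between $\tau$ and $\tau-1$ consistent, handle the $\min(k,\tau)$ truncation in the double sums of \eqref{eq:StateasZandMU11} correctly when assembling $\bm{\Phi}_2$ and $\bm{\Psi}_2$, and check that the single unknown $\bm{\upmu}_\K^\star$ appearing in \eqref{eq:aggregateControl}--\eqref{eq:VectorLCP} is literally the same vector throughout, so that \eqref{eq:VectorLCP} is a bona fide linear complementarity problem in one variable. Everything beyond that is routine block-matrix algebra, so I would state the intermediate identities and relegate the term-by-term verification to the notation fixed in the Appendix.
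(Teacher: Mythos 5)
Your proposal is correct and follows essentially the same route as the paper: eliminate the co-state via the affine representation $\lambda_k^i = P_k^i x_k^\star + \zeta_k^i$ from the Proposition, solve the decoupled forward/backward recursions with the transition matrices, aggregate over stages, substitute \eqref{eq:aggregateState} and \eqref{eq:aggregateCoState} into \eqref{eq:aggregateControl} and \eqref{eq:VectorLCP}, and then invoke Theorem \ref{th:SuffnadUniquness} via the positive definiteness of $\{Y_k^i\}$ to conclude the resulting profile is a GOLNE. The paper simply packages the elimination and aggregation steps as the derivation preceding the theorem, so its stated proof reduces to your final substitution and appeal to Theorem \ref{th:SuffnadUniquness}.
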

%%%
\begin{proof}
	Substituting \eqref{eq:aggregateState} and \eqref{eq:aggregateCoState} into \eqref{eq:aggregateControl} and \eqref{eq:VectorLCP}, respectively, yields the strategy profile \eqref{eq:FinalOLNE} and the \(\mathrm{LCP}\) \eqref{eq:FinalOLNElcp}. Since the matrices \(Y_k^i\) are positive definite for all \(k \in \K_l\) and \(i \in \N\), it follows from Theorem \ref{th:SuffnadUniquness} that the strategy profile given by \eqref{eq:FinalOLNE} is indeed a GOLNE.
\end{proof}
 
\begin{remark}
The non-singularity of the matrices $\{\Lambda_k,~ k \in \K_l\}$, defined in Assumption \ref{ass:LambdaaffineX}, can be verified using the problem data; see also Remark \ref{rem:invertibility}. 
	We observe that \(\mathrm{LCP}(x_0)\) is parametric with respect to \(x_0 \in \mathbb{R}^n\). The set \(\left\{x_0 \in \mathbb{R}^n ~|~ \mathrm{LCP}(x_0) \neq \emptyset\right\}\) represents all initial conditions for which a GOLNE exists for the DGC. If \(\mathrm{LCP}(x_0)\) yields multiple solutions, each is a GOLNE by Theorem \ref{th:OLNElcp}. The existence of LCP solutions and related numerical methods are well-studied in the optimization community; see \cite{Pang:09} for details. 
\end{remark} 
 
%%%%%%%
\section{Numerical Illustration}
\label{sec:Numerical}
\begin{figure}[H]
	\centering
	\includegraphics[scale=.675]{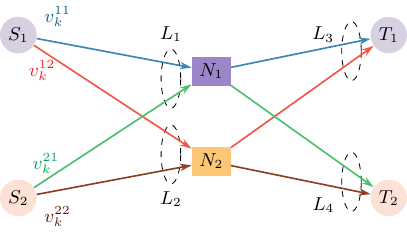}
	\caption{Network flow game with two players and two relay nodes.} 
	\label{fig:relay}
\end{figure}
We illustrate our results using a simplified version of the network-flow control game from \cite{Zazo:16}. The problem involves two sources ($S_1$, $S_2$), two destinations ($T_1$, $T_2$), and two relay nodes ($N_1$, $N_2$) as shown in Fig. \ref{fig:relay}. Each player $i \in \{1, 2\}$ can choose one of two paths ($S_i-N_l-T_i$, where $l \in \{1, 2\}$) to transmit data. The flow for player $i$ through relays $N_1$ and $N_2$ at time $k$ is denoted as $v_k^{i1}$ and $v_k^{i2}$, respectively. Relay nodes have batteries that deplete proportionally to the outgoing flow, described by:
\begin{align}
	x_{k+1}^l=x_k^l-\delta^l(v_{k}^{1l}+v_{k}^{2l}),\label{eq:batterydynamics}
\end{align}
where $\delta^l > 0$ is the depletion factor. Each player $i$ aims to maximize a rate-dependent concave utility function $\sum_{l=1}^{2}(w^{il}v_k^{il} - \frac{1}{2}t^{il}(v_k^{il})^2)$, where $w^{il}$ and $t^{il}$ are positive constants. Additionally, players gain a payoff based on battery levels, modeled by the concave function $\sum_{l=1}^{2}(d^{il}x_k^{l} - \frac{1}{2}s^{il}(x_k^l)^2)$ at decision times $k \in \K_l$, and by $\sum_{l=1}^{2}(D^{il}x_K^{l} - \frac{1}{2}S^{il}(x_K^l)^2)$ at time $K$. Thus, each player $i \in \{1, 2\}$ minimizes the cost function defined as:
\begin{multline}
	J^{i}=-\beta^K\sum_{l=1}^{2}\big(D^{il}x_K^{l}-\tfrac{1}{2}S^{il}(x_{K}^{l})^2\big)\\ -\sum_{k=0}^{K-1}\sum_{l=1}^{2}\beta^k\Big(d^{il}x_k^{l}-\tfrac{1}{2}s^{il}(x_{k}^{l})^2+w^{il}v_k^{il}-\tfrac{1}{2}t^{il}(v_k^{il})^2\Big), \label{eq:numericalobj}
\end{multline}
where $\beta\in(0, 1)$ represents the discount factor. The constraints at each time instant $k\in \K_l$ are given by
\begin{subequations}\label{eq:NFconstraints}
	\begin{align}
		& \text{Relay: } L_k^l=v_{k}^{1l}+v_{k}^{2l} \leq c^l >0,~l=1,2,\label{eq:CoupleConst}\\
		&\text{Destination: }  \tilde{L}_k^i=  v_{k}^{i1}+v_{k}^{i2} \leq \tilde{c}^i>0,~i=1,2,\label{eq:PrivateConst}\\
		&\text{Battery:  }  x_{k+1}^l=  x_k^l-\delta^l(v_{k}^{1l}+v_{k}^{2l})\geq {b^l_{min}} ,~l=1,2,\label{eq:StateCoupleConst}\\
		&\text{Transmission: }  0\leq v_{k}^{il} \leq \bar{v}^{il}= \tfrac{w^{il}}{t^{il}},~i,l=1,2.\label{eq:BoxConst}
	\end{align}
\end{subequations}   
The coupled constraint \eqref{eq:CoupleConst} and individual constraint \eqref{eq:PrivateConst} represent capacity limits for both relay and destination nodes. The mixed constraint \eqref{eq:StateCoupleConst} ensures that a relay node $l \in \{1, 2\}$ cannot transmit data below its minimum battery level, ${b^l_{min}} \geq 0$. Constraint \eqref{eq:BoxConst} further ensures that data transmitted from each source is non-negative and does not exceed $\frac{w^{il}}{t^{il}}$, corresponding to the concave increasing region of the rate-dependent payoff. 
Both players share relay nodes, and their transmission rates are interdependent due to the capacity and battery constraints in \eqref{eq:CoupleConst} and \eqref{eq:StateCoupleConst}. 
The network flow game is represented in standard form \eqref{eq:DGC}, with the state variable $x_k = \col{x_k^1, x_k^2, z_k}$, where $z_k = 1$ for $k \in \K$, and the decision variable for player $i$ on path $l$ is $u_k^{il} = v_k^{il} - \frac{w^{il}}{t^{il}}$ for $i=1, 2$ and $l=1, 2$. The network flow game in standard form \eqref{eq:DGC} is obtained as follows
\begin{align*}
		&    A_k=\begin{bmatrix}
			1            &0   &-\delta^1\alpha^1\\
			0         &1  &-\delta^2\alpha^2\\
			0         &0   &1
		\end{bmatrix},~
		B_k^1=B_k^2=\begin{bmatrix}
			-\delta^1    &0 \\
			0    &-\delta^2 \\
			0     &0    
		\end{bmatrix},\\
		& M^i_k=\col{\mathbf{0}_{6\times 3}, \tilde{M} ,\mathbf{0}_{1\times 3}},~r_k^i=\col{[r^i]_l}_{l=1}^{9},~i=1,2,\\
		&[N^1_k]_2=[N^2_k]_1=\col{\mathbf{0}_{4\times2},\tilde{N}_1,\tilde{N}_2,\mathbf{0}_{1\times 2}},\\
		&[N^1_k]_1=[N^2_k]_2=\col{\mathbf{I}_{2\times2},-\mathbf{I}_{2\times2},\tilde{N}_1,\tilde{N}_2, [-1 ~-1]},\\
		&   \tilde{M}=\begin{bmatrix}
			1        &0    &0   \\
			0         &1 &0  \\
		\end{bmatrix},~ \tilde{N}_1=-\mathbf{I}_{2\times 2},~
		\tilde{N}_2=\begin{bmatrix}
			-\delta^1    &0 \\
			0 &-\delta^2  
		\end{bmatrix}, \\
		& Q^i_k =\beta^k\begin{bmatrix}
			s^i &-d^i\\
			-d^{i\prime} &-\alpha^{i+4}
		\end{bmatrix},~Q^i_K =\beta^K\begin{bmatrix}
			S^i &-D^i\\
			-D^{i\prime} &0
		\end{bmatrix},\\     &R_k^{ii}=\beta^k\oplus_{l=1}^{2}t^{il},~p_k^i=0,~R_k^{ij}=0,~i,j=1,2,~i\neq j,\\
		&  \alpha^1=\tfrac{w^{11}}{t^{11}}+\tfrac{w^{21}}{t^{21}},~\alpha^2=\tfrac{w^{12}}{t^{12}}+\tfrac{w^{22}}{t^{22}},~\alpha^3=\tfrac{w^{11}}{t^{11}}+\tfrac{w^{12}}{t^{12}},\\
		&\alpha^4=\tfrac{w^{21}}{t^{21}}+\tfrac{w^{22}}{t^{22}},~ \alpha^5=\tfrac{(w^{11})^2}{t^{11}}+\tfrac{(w^{12})^2}{t^{12}},~\alpha^6=\tfrac{(w^{21})^2}{t^{21}}+\tfrac{(w^{22})^2}{t^{22}},\\
  %\end{align*}\begin{align*} 
		&[r^i]_1= \tfrac{w^{i1}}{t^{i1}},~[r^i]_2= \tfrac{w^{i2}}{t^{i2}},~[r^i]_{3}=[r^i]_{4}=0,~[r^i]_5=c^{1}-\alpha^{1},\\
		&[r^i]_{6}=c^{2}-\alpha^{2},~[r^i]_{7}=-\delta^1\alpha^{1}-b,
		[r^i]_{8}=-\delta^2\alpha^{2}-b,\\
		&[r^1]_{9}=\tilde{c}^1-\alpha^3,~[r^2]_{9}=\tilde{c}^2-\alpha^4,~S^i=\oplus_{l=1}^{2}S^{il},~s^i=\oplus_{l=1}^{2}s^{il},\\
		& d^i=\col{d^{il}}_{l=1}^{2},~D^i=\col{D^{il}}_{l=1}^{2},~i=1,2.
\end{align*}
The problem parameters are set as follows: $K=60, ~x_0^1=9,~x_0^2=6,~b^1_{\min}=1,~b_{\min}^2=0.5,~\delta^1=0.125,~\delta^2=0.075,~\beta=0.95,~w^{11}=30,  ~ w^{12}=16,~w^{21}=26,~w^{22}=10,~ t^{i1}=t^{i2}=10,~d^{il}=6,~D^{il}=8.0, ~s^{11}=6,~s^{12}=5.5,~s^{21}=7,~s^{22}=7.5,~S^{11}=8,~S^{12}=8.5,~S^{21}=9,~S^{22}=9.5,~
{c}^1=5.4,~{c}^2=2.6,~\tilde{c}^1=4,~\tilde{c}^2=3$. 
\begin{remark}  \label{rem:potential} 
The open-loop potential game approach of \cite{Zazo:16} is applicable to the above flow control game only when \(s^{1l} = s^{2l}\) and \(S^{1l} = S^{2l}\) for \(l = 1, 2\), as only in this case are the sufficient conditions in \cite[Lemma 3 and Lemma 4]{Zazo:16} satisfied. Nevertheless, these constraints restrict the range of strategic scenarios within the flow-control game. Consequently, the approach presented in \cite{Zazo:16} is unsuitable for computing a GOLNE in such cases. 
\end{remark}
\begin{figure}[t] 
	\centering 
	\pgfplotsset{axis line style={black!25},}
\subfloat[]{\label{fig:fig21}
	\begin{tikzpicture}[             scale=0.5  ,>=latex']
		\begin{axis}[xlabel= Time steps, grid=both,
			grid style={line width=.1pt, draw=gray!10}, minor tick num=3, xmin=0, xmax=60,ymin=-0.15,ymax=3.1]
			\addplot[MidnightBlue, very thick] table [x index=0, y index=1]{NonSymmOpenloop.dat}; 
			\addlegendentry{$v_k^{11}$};
			\addplot[Red,   very thick] table [x index=0, y index=2]{NonSymmOpenloop.dat}; 
			\addlegendentry{$v_k^{12}$};
		 \addplot[Green, very thick ] table [x index=0, y index=3]{NonSymmOpenloop.dat}; 
		\addlegendentry{$v_k^{21}$};
		\addplot[Sepia, very thick ] table [x index=0, y index=4]{NonSymmOpenloop.dat}; 
		\addlegendentry{$v_k^{22}$};	
            \draw[<-, black] (13,1.6) to (14.5, 2)node[right]{\eqref{eq:BoxConst}};
            \draw[<-, black] (9,1) to (6.5, 0.6)node[below]{\eqref{eq:BoxConst}};
   	        \draw[<-, black] (48,0) to (48, 0.3)node[above]{\eqref{eq:BoxConst}};
   	        \draw[-,black!60,dashed](0,3) to (60,3) node[xshift=-3.5cm,below,black]{$\bar{v}^{11}$}; 
   	        \draw[-,black!60,dashed](0,0) to (60,0);
            \draw[-,black!60,dashed](0,1.6) to (60,1.6) node[xshift=-2cm,below,black]{$\bar{v}^{12}$}; 
   	        \draw[-,black!60,dashed](0,2.6) to (60,2.6) node[xshift=-2.75cm,below,black]{$\bar{v}^{21}$};  
   	        \draw[-,black!60,dashed](0,1) to (60,1) node[xshift=-1cm,above,black]{$\bar{v}^{22}$}; 
   	        \draw[-,black!60,dashed](0,0) to (60,0);
		\end{axis}
\end{tikzpicture}}  
\qquad 
\subfloat[]{\label{fig:fig22}
	\begin{tikzpicture}[             scale=0.5  ,>=latex']
		\begin{axis}[xlabel= Time steps, grid=both,
			grid style={line width=.1pt, draw=gray!10}, minor tick num=3, xmin=0, xmax=60,ymin=-0.75,ymax=9.5]
			\addplot[Violet,   very thick] table [x index=0, y index=1]{NonSymmOpenloopstate.dat}; 
			\addlegendentry{$x_k^{1}$};
			\addplot[YellowOrange, very thick] table [x index=0, y index=2]{NonSymmOpenloopstate.dat};
			\addlegendentry{$x_k^{2}$};
			\draw[<-, black] (45,0.5) to (45, -0.25)node[right]{\eqref{eq:StateCoupleConst}};
			\draw[<-, black] (21,1) to (21, 0.2)node[below]{\eqref{eq:StateCoupleConst}};
			\draw[-,black!60,dashed](0,1) to (50,1)node[xshift=-4.5cm,above,black]{$b^1_{\min}$};
			\draw[-,black!60,dashed](0,0.5) to (50,0.5)node[xshift=-4.5cm,below,black]{$b^2_{\min}$};
		\end{axis}
\end{tikzpicture}} \\
\subfloat[]{\label{fig:fig23}
	\begin{tikzpicture}[             scale=0.5  ,>=latex']
		\begin{axis}[xlabel= Time steps, grid=both,
			grid style={line width=.1pt, draw=gray!10}, minor tick num=3, xmin=0, xmax=60, ymin=-0.15,ymax=5.6]
	       \addplot[Violet, very thick] table [x index=0, y index=5]{NonSymmOpenloop.dat}; 
	       \addlegendentry{$L_k^{1}$};
	       \addplot[YellowOrange, very thick] table [x index=0, y index=6]{NonSymmOpenloop.dat};
	       \addlegendentry{$L_k^{2}$};
	        \draw[<-, black] (10,2.6) to (9,2)node[below]{\eqref{eq:CoupleConst}};
	        \draw[-,black!60,dashed](0,5.4) to (60,5.4) node[xshift=-4cm,below,black]{$c^1$};  
	        \draw[-,black!60,dashed](0,2.6) to (60,2.6) node[xshift=-1.5cm,below,black]{$c^2$};  
 	        \draw[<-, black] (20.5,0.2) to (23.5, 0.5)node[right ]{\eqref{eq:StateCoupleConst}};
   	        \draw[<-, black] (44,0.1) to (46.5, 0.5)node[right ]{\eqref{eq:StateCoupleConst}};
	        \addplot[black, dashed ] table [x index=0, y index=9]{NonSymmOpenloop.dat} node[right,pos=0.575]{$\frac{x^1_k-b^1_{\min}}{\delta^1}$};  
	        \addplot[black, dashed ] table [x index=0, y index=10]{NonSymmOpenloop.dat}
	         node[right,pos=0.75]{$\frac{x^2_k-b^2_{\min}}{\delta^2}$}; 
		\end{axis}
\end{tikzpicture}}  
\qquad
\subfloat[]{\label{fig:fig24}
	\begin{tikzpicture}[             scale=0.5  ,>=latex']
		\begin{axis}[xlabel= Time steps, grid=both,
			grid style={line width=.1pt, draw=gray!10}, minor tick num=3, xmin=0, xmax=60, ymin=-0.15,ymax=4.2]
						
			\addplot[blue!50,   very thick] table [x index=0, y index=7]{NonSymmOpenloop.dat}; 
			\addlegendentry{$\tilde{L}_k^{1}$ };
			\addplot[Black!50, very thick] table [x index=0, y index=8]{NonSymmOpenloop.dat}; 
			\addlegendentry{$\tilde{L}_k^{2}$ }; 
			\draw[<-, black] (5,3) to (5,2.7)node[below]{\eqref{eq:PrivateConst}};
			\draw[<-, black] (5,4) to (5,3.7)node[below]{\eqref{eq:PrivateConst}};
			\draw[-,black!60,dashed](0,4) to (60,4) node[xshift=-2.5cm,below,black]{$\tilde{c}^1$}; 
			\draw[-,black!60,dashed](0,3) to (60,3) node[xshift=-2cm,above,black]{$\tilde{c}^2$};
		\end{axis}
\end{tikzpicture}}
	\caption{Flow-rates/controls of the players (panel (a)), battery levels (panel (b)), aggregate flow-rates at the relay nodes (panel (c)) and destination nodes (panel (d)). Arrow with equation number pointing to a dashed dark line indicates that one of the constraints \eqref{eq:NFconstraints} is active.}  
	\label{fig:Fig3}
\end{figure}
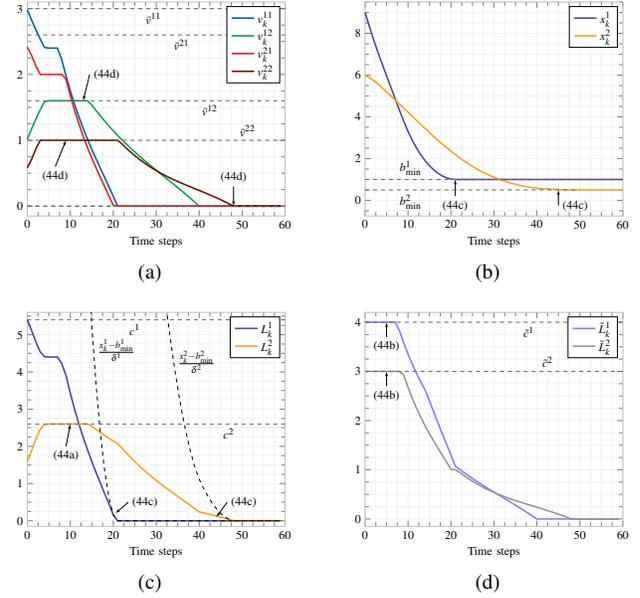
It is easily verified that the conditions required in Theorems \ref{th:OLNElcp} are satisfied for the chosen parameter values. We used the freely available PATH solver (available at {\url{https://pages.cs.wisc.edu/~ferris/path.html}}) to solve the LCP \eqref{eq:FinalOLNElcp}. 
The flow-rate constraints \eqref{eq:NFconstraints}, with the above parameter values are given by
\begin{align*}
		\text{Transmission:}\quad  & 0\leq v_k^{11}\leq 3,~0\leq v_k^{12}\leq 1.6\\
		&0\leq v_k^{21}\leq 2.6,~0\leq v_k^{22}\leq 1\\
		\text{Relay:}\quad & v_k^{11}+v_k^{21}\leq 5.4,~v^{12}_k+v_k^{22}\leq 2.6\\
		\text{Destination:}\quad & v_k^{11}+v_k^{12}\leq 4,~v_k^{21}+v_k^{22}\leq 3\\
		\text{Battery:}\quad & x_k^1-0.125(v_k^{11}+v_k^{21})\geq 1\\
		& x_k^2-0.075(v_k^{12}+v_k^{22})\geq 0.5.
\end{align*}
The transmission rate-dependent utility is concave, indicating that players receive higher payoffs by increasing their flow rates but are also incentivized to reduce flows due to the concave, increasing battery-level-dependent payoff. 
Figs. \ref{fig:Fig3} compare individual and aggregate flow rates, as well as relay battery levels, under GOLNE strategies. Initially, both players reduce flows through relay node $N_1$, which depletes faster than $N_2$ (see Fig. \ref{fig:fig21}). They then increase flows through $N_2$ to their peak rates, $v_k^{12} = 1.6$ and $v_k^{22} = 1$, which match the relay node's maximum capacity (see Figs. \ref{fig:fig21} and \ref{fig:fig23}). Since further transmission through $N_2$ is impossible, players maintain constant rates through $N_1$, constrained by destination capacities: $\tilde{c}^1 = v_k^{11} + v_k^{12} \rightarrow v_k^{11} = 4 - 1.6 = 2.4$ and $\tilde{c}^2 = v_k^{21} + v_k^{22} \rightarrow v_k^{21} = 3 - 1 = 2$ (see Figs. \ref{fig:fig21} and \ref{fig:fig23}). Aggregate flow rates through $N_1$ are not maximized because players self-limit to conserve battery life. In contrast, at $N_2$, capacity constraints are more restrictive than the incentive to conserve battery life, leading to a trade-off in the GOLNE flow rates. Players stop transmitting when relay nodes reach minimum battery levels: $N_1$ at $t = 21$ and $N_2$ at $t = 48$ (see Fig. \ref{fig:fig22}). 
%%%%%%%
%%%%%%%
\section{Conclusions}\label{sec:Conclusions}  
We studied a class of linear quadratic difference games with coupled-affine inequality constraints. We derived both the necessary and sufficient conditions for the existence of a generalized open-loop Nash equilibrium by establishing an equivalence between solutions of specific discrete-time linear complementarity systems and the convexity of players' objective functions. With additional assumptions, we demonstrated that GOLNE strategies can be obtained by solving a large-scale linear complementarity problem. As part of our future work, we plan to investigate the existence of a generalized feedback Nash equilibrium within this class of dynamic   games. 

\appendix
\subsubsection*{Notations for Section \ref{sec:Solvability}}
\label{sec:appendixnotation}
We define 
$\mathbf{R}_k=\oplus_{i=1}^{N}R_k^{i i}$,  
$\mathbf{B}_k=\oplus_{i=1}^{N}B_k^i$, $\mathbf{N}_k=\oplus_{i=1}^{N}[N^{i}_k]_i$, $\mathbf{M}_k=\oplus_{i=1}^{N}M^{i}_k$, $\mathbf{r}_k =\col{r_k^{i}}_{i=1}^{N}$, $\bu_k^{\star}=\col{u_k^{i\star}}_{i=1}^{N}$, $\bm{\upmu}_{k}^{\star}=\col{\mu_k^{i\star}}_{i=1}^{N}$, ~$k\in\K_l$,
$\mathbf{p}_k=\col{p_k^{i}}_{i=1}^{N}$,   
$P_k=\col{P_k^i}_{i=1}^{N}$, $\bm{\zeta}_{k}=\col{\zeta_{k}^{i}}_{i=1}^{N}$, $k\in\K$.  
\qquad 
\underline{In \eqref{eq:LCS5}:}  $\mathbf{G}_k=(\Lambda_{k})^{-1}A_{k}$, $[\overline{\mathbf{G}}_{k+1}]_i=-(\Lambda_{k})^{-1}B^i_k (R^{ii}_k)^{-1}{B^i_k}^{\prime}$, 
$[\tilde{\mathbf{G}}_{k}]_i=(\Lambda_{k})^{-1}B^i_k (R^{ii}_k)^{-1}[N^{i}_k]_i^{\prime}$,
$\mathbf{F}_k=-(\mathbf{R}_k)^{-1}\mathbf{B}_k^{\prime}P_{k+1}\mathbf{G}_k$, 
$\overline{\mathbf{F}}_{k+1}=-(\mathbf{R}_k)^{-1}\mathbf{B}_k^{\prime}(\mathbf{I}+P_{k+1} \overline{\mathbf{G}}_{k+1})$,  
$\tilde{\mathbf{F}}_k=(\mathbf{R}_k)^{-1}(\mathbf{N}_{k}^{\prime}-\mathbf{B}_k^{\prime}P_{k+1} \tilde{\mathbf{G}}_{k})$,
$\mathbf{H}_{k+1}=(\mathbf{I}_{N}\otimes A_k^{\prime})(\mathbf{I}+P_{k+1}\overline{\mathbf{G}}_{k+1})$, $\overline{\mathbf{H}}_{k}=(\mathbf{I}_{N}\otimes A_k^{\prime})P_{k+1}\tilde{\mathbf{G}}_{k}-\mathbf{M}_{k}^{\prime}$ for $k\in \K_l$.\qquad  
 \underline{For the state transition matrices in \eqref{eq:Forwardeq2}-\eqref{eq:Backwardeq2}:} $\phi(k, \tau)=\mathbf{G}_{k-1}\mathbf{G}_{k-2}\cdots\mathbf{G}_{\tau}$ for $k > \tau$ and $\phi(k, \tau)=\mathbf{I}$ for $k=\tau$, $k, \tau \in \K$. $\varphi(k, \tau)=\mathbf{H}_{k+1}\mathbf{H}_{k+2}\cdots\mathbf{H}_{\tau}$ for $k < \tau$ and $\varphi(k, \tau)=\mathbf{I}$  for $k=\tau$, $k, \tau \in \K_l$.\qquad 
\underline{In \eqref{eq:OLNEaggrEq}:}
$x_{\K}^{\star}=\col{x_{k}^{\star}}_{k=0}^{K-1}$, $\bu^{\star}_\K=\col{\bu^{\star}_k}_{k=0}^{K-1}$, $\bm{\upmu}^{\star}_\K=\col{\bm{\upmu}^{\star}_k}_{k=0}^{K-1}$,  $\mathbf{p}_\K=\col{\mathbf{p}_{k+1}}_{k=0}^{K-1}$, $\bm{\zeta}_\K=\col{\bm{\zeta}_{k+1}}_{k=0}^{K-1}$, $\textbf{r}_\K=\col{\textbf{r}_k}_{k=0}^{K-1}$, 
$\mathbf{F}_\K=\oplus_{k=0}^{K-1}\mathbf{F}_k$, $ 
\overline{\mathbf{F}}_\K=\oplus_{k=0}^{K-1}\overline{\mathbf{F}}_{k+1}$, $\tilde{\mathbf{F}}_\K=\oplus_{k=0}^{K-1}\tilde{\mathbf{F}}_k$, 
$[\bm{\Phi}_0]_{k}=\phi(k-1, 0)$, $[\bm{\Phi}_1]_{1 \tau}=\mathbf{0}$, $[\bm{\Phi}_2]_{1 \tau}=\mathbf{0}$,  $[\bm{\Phi}_1]_{k \tau}= \sum_{\rho=1}^{\mathrm{min}\,\,(k-1,\tau-1)}\phi(k-1, \rho)\overline{\mathbf{G}}_{\rho}\varphi(\rho, \tau-1)$ for $k>1$,  
$[\bm{\Phi}_2]_{k \tau}=\phi(k-1, 1) \tilde{\mathbf{G}}_{0}$ for $k>\tau=1$,
$[\bm{\Phi}_2]_{k \tau}=\big(\sum_{\rho=1}^{\tau-1}\phi(k-1, \rho)\overline{\mathbf{G}}_{\rho}\varphi(\rho, \tau-1)\big)\overline{\mathbf{H}}_{\tau-1}
+\phi(k-1, \tau)\tilde{\mathbf{G}}_{\tau-1}$ for $1<\tau<k$, 
$ [\bm{\Phi}_2]_{k \tau}=\big(\sum_{\rho=1}^{k-1}\phi(k-1, \rho)\overline{\mathbf{G}}_{\rho}\varphi(\rho, \tau-1)\big)\overline{\mathbf{H}}_{\tau-1}$ for $\tau \geq k$  
$[\bm{\Psi}_1]_{k \tau}=\varphi(k, \tau)$, $[\bm{\Psi}_2]_{k \tau}= \varphi(k, \tau-1)\overline{\mathbf{H}}_{\tau-1}$ for $\tau \geq k$, $[\bm{\Psi}_1]_{k \tau}=\mathbf{0}$, $[\bm{\Psi}_2]_{k \tau}=\mathbf{0}$ for $\tau < k$ with $ k,\tau \in \K_r$, $\mathsf{M}_\K=\oplus_{k=0}^{K-1}(\overline{\mathbf{M}}_k+\overline{\mathbf{N}}_k\mathbf{F}_{k})$, $\mathsf{N}_\K=\oplus_{k=0}^{K-1}(\overline{\mathbf{N}}_k\overline{\mathbf{F}}_{k+1})$ and $\tilde{\mathsf{N}}_\K=\oplus_{k=0}^{K-1}(\overline{\mathbf{N}}_k\tilde{\mathbf{F}}_{k})$.
%%%%
 
\bibliographystyle{IEEEtran}
\bibliography{CLSupdated} 
%%%
 
\end{document}